\begin{document}

\newtheorem{theorem}{Theorem}[section]
\newtheorem{proposition}[theorem]{Proposition}
\newtheorem{lemma}[theorem]{Lemma}
\newtheorem{corollary}[theorem]{Corollary}
\newtheorem{conjecture}[theorem]{Conjecture}
\newtheorem{question}[theorem]{Question}
\newtheorem{problem}[theorem]{Problem}
\theoremstyle{definition}
\newtheorem{definition}{Definition}

\theoremstyle{remark}
\newtheorem{remark}[theorem]{Remark}

\renewcommand{\labelenumi}{(\roman{enumi})}
\def\theenumi{\roman{enumi}}

\numberwithin{equation}{section}

\renewcommand{\Re}{\operatorname{Re}}
\renewcommand{\Im}{\operatorname{Im}}

\def\scrA{{\mathcal A}}
\def\scrB{{\mathcal B}}
\def\scrD{{\mathcal D}}
\def\scrL{{\mathcal L}}
\def\scrS{{\mathcal S}}

\def \G {{\Gamma}}
\def \g {{\gamma}}
\def \R {{\mathbb R}}
\def \H {{\mathbb H}}
\def \C {{\mathbb C}}
\def \Z {{\mathbb Z}}
\def \Q {{\mathbb Q}}
\def \TT {{\mathbb T}}
\newcommand{\T}{\mathbb T}
\def \GinfmodG {{\Gamma_{\!\!\infty}\!\!\setminus\!\Gamma}}
\def \GmodH {{\Gamma\setminus\H}}
\def \vol {\hbox{vol}}
\def \sl  {\hbox{SL}_2(\mathbb Z)}
\def \slr  {\hbox{SL}_2(\mathbb R)}
\def \psl  {\hbox{PSL}_2(\mathbb R)}

\newcommand{\mattwo}[4]
{\left(\begin{array}{cc}
                        #1  & #2   \\
                        #3 &  #4
                          \end{array}\right) }

\newcommand{\rum}[1] {\textup{L}^2\left( #1\right)}
\newcommand{\norm}[1]{\left\lVert #1 \right\rVert}
\newcommand{\abs}[1]{\left\lvert #1 \right\rvert}
\newcommand{\inprod}[2]{\left \langle #1,#2 \right\rangle}
\newcommand{\tr}[1] {\hbox{tr}\left( #1\right)}

\renewcommand{\^}[1]{\widehat{#1}}

\renewcommand{\i}{{\mathrm{i}}}

\date{\today}

\newcommand{\area}{\operatorname{area}}
\newcommand{\ecc}{\operatorname{ecc}}

\newcommand{\Op}{\operatorname{Op}}
\newcommand{\dom}{\operatorname{Dom}}
\newcommand{\Dom}{\operatorname{Dom}}

\newcommand{\Norm}{\mathcal N}

\title[Wave functions for a point scatterer on the torus]{Statistics of wave functions for a point scatterer  on the  torus}
\date{\today}
\author{Ze\'ev Rudnick \and Henrik Uebersch\"ar}

\address{Raymond and Beverly Sackler School of Mathematical Sciences,
Tel Aviv University, Tel Aviv 69978, Israel}
\email{rudnick@post.tau.ac.il}
\address{Raymond and Beverly Sackler School of Mathematical Sciences,
Tel Aviv University, Tel Aviv 69978, Israel}
\email{henrik@post.tau.ac.il}
\date{\today}

\begin{abstract}
Quantum systems whose classical counterpart have ergodic dynamics
are quantum ergodic in the sense that almost all eigenstates are
uniformly distributed in phase space. In contrast, when the
classical dynamics is integrable, there is concentration of
eigenfunctions on invariant structures in phase space. In this paper
we study eigenfunction statistics for the Laplacian perturbed by a
delta-potential (also  known as a point scatterer) on a  flat torus,
a popular model used to study the transition between integrability
and chaos in quantum mechanics. The eigenfunctions of this operator
consist of eigenfunctions of the Laplacian which vanish at the
scatterer, and new, or perturbed, eigenfunctions. We show that
almost all of the perturbed eigenfunctions are uniformly distributed
in configuration space.
\end{abstract}

\maketitle

\section{Introduction}

Quantum systems whose classical counterpart have ergodic dynamics
satisfy Schnirelman's theorem, which asserts that almost all
eigenstates are uniformly distributed in phase space in an
appropriate sense \cite{Schnirelman, CdV2, Zelditch}. In contrast,
when the classical dynamics is integrable, there is concentration of
eigenfunctions on invariant structures in phase space. In this paper
we study eigenfunction statistics for an intermediate system, that
of a point scatterer on the flat torus.

The use of point scatterers, or $\delta$-potentials, goes back to
the Kronig-Penney model \cite{Kronig-Penney} which is  an idealized
solvable model used to explain conductivity in a solid crystal and
the appearance of electronic band structure. They have also been
studied in the mathematical literature to explain the spurious
occurrence of the Riemann zeros in a numerical experiment
\cite{Hejhal}. Billiards with a point scatterer have been used
extensively in the quantum chaos literature, starting with Seba
\cite{Seba}, to model quantum systems strongly perturbed in a region
smaller than the wavelength of the particle.


The   flat torus 
is a standard example of a
system for which the geodesic flow is completely integrable. 
Placing a scatterer at a point $x_0$ in the torus does not change
the classical dynamics except for a measure zero set of
trajectories, and gives a quantum system whose dynamics is generated
by an operator formally written as
\begin{equation}\label{formalop intro}
-\Delta + \alpha \delta_{x_0}
\end{equation}
with $\delta_{x_0}$ being the Dirac mass at $x_0$ and  $\alpha$
being a coupling parameter. Mathematically this corresponds to
picking a self-adjoint extension of the Laplacian $-\Delta$ acting
on functions vanishing near $x_0$ (see Section \S~\ref{sec:pt
scatterer} and Appendix \S~\ref{sec:rigorous}). Such extensions are
parameterized by a phase $\phi\in (-\pi, \pi]$, with $\phi=\pi$
corresponding to the standard Laplacian ($\alpha=0$ in
\eqref{formalop intro}). We denote the corresponding operator by
$-\Delta_{x_0,\phi}$, whose domain consists of a suitable space of
functions $f(x)$ whose behavior near $x_0$ is given by
\begin{equation}
f(x) = C\left( \cos \frac \phi 2 \cdot \frac{\log|x-x_0|}{2\pi}  +
\sin \frac \phi 2\right) +o(1), \quad x\to x_0
\end{equation}
for some constant $C$. For $\phi=\pi$ the eigenvalues are those of
the standard Laplacian.
For $\phi\neq \pi$ ($\alpha\neq 0$) the resulting spectral problem
still has the eigenvalues from the unperturbed problem, with
multiplicity decreased by one,
as well as a new set $\Lambda_\phi$ of eigenvalues interlaced
between the sequence of unperturbed eigenvalues, each appearing with
multiplicity one, and satisfying the spectral equation
\begin{equation}\label{spectral eq}
\sum_n |\psi_n(x_0)|^2(\frac 1{\lambda_n-\lambda} - \frac
{\lambda_n}{\lambda_n^2+1}) = c_0 \tan \frac \phi 2
\end{equation}
for a certain $c_0>0$, where $\{\psi_n(x)\}$ form an orthonormal
basis of eigenfunctions for the unperturbed problem: $-\Delta
\psi_n=\lambda_n\psi_n$.   The  eigenfunction corresponding to
$\lambda\in \Lambda_\phi$ is the Green's function
\begin{equation}
G_\lambda(x;x_0)=(\Delta+\lambda)^{-1}\delta_{x_0} \;.
\end{equation}


%

Our main result is that for almost all $\lambda\in \Lambda_\phi$,
the perturbed eigenfunctions $G_\lambda(\bullet;x_0)$ are uniformly
distributed in position space. To formulate the result precisely, we
denote by
\begin{equation}
g_\lambda(x):=\frac{G_\lambda(x;x_0)}{||G_\lambda||_2}
\end{equation}
 the $L^2$-normalized Green's function:
\begin{theorem}\label{Thm:intro}
Fix $\phi\in (-\pi,\pi)$. There is a subset
$\Lambda_{\phi,\infty}\subset \Lambda_\phi$ of density one so that
for all observables $a\in C^\infty(\TT^2)$,
\begin{equation}
\int_{\TT^2} a(x) g_\lambda(x)^2 dx \to \frac 1{\area(\TT^2)}
\int_{\TT^2} a(x) dx
\end{equation}
as $\lambda \to \infty$ along the subsequence
$\Lambda_{\phi,\infty}$
\end{theorem}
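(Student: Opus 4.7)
My plan is to reduce the theorem to lattice-sum analysis via the Fourier expansion of the Green's function. On $\TT^2$, using the orthonormal basis $\psi_\xi(x) = e^{2\pi\i\xi\cdot x}$ indexed by $\xi\in\Z^2$ with eigenvalues $\lambda_\xi = 4\pi^2|\xi|^2$, one has
\begin{equation*}
G_\lambda(x;x_0) = \sum_\xi \frac{e^{2\pi\i\xi\cdot(x-x_0)}}{\lambda_\xi-\lambda},
\qquad
\norm{G_\lambda}_2^2 = \sum_\xi \frac{1}{(\lambda_\xi-\lambda)^2}.
\end{equation*}
Testing against a pure Fourier mode $a(x) = e^{2\pi\i\eta\cdot x}$ with $\eta\in\Z^2\setminus\{0\}$ reduces the theorem to showing
\begin{equation*}
R_\eta(\lambda) \;:=\; \frac{\sum_\xi (\lambda_\xi-\lambda)^{-1}(\lambda_{\xi+\eta}-\lambda)^{-1}}{\sum_\xi (\lambda_\xi-\lambda)^{-2}} \;\longrightarrow\; 0
\end{equation*}
along a density-one subsequence $\Lambda_{\phi,\infty}\subset\Lambda_\phi$ for each fixed $\eta$; a standard density/diagonal argument in $\eta$ then extends this to all smooth observables.

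The second step is to localise the numerator and denominator to the two Laplace eigenspaces flanking $\lambda$. By interlacing, each $\lambda\in\Lambda_\phi$ lies in a gap $(\lambda_N,\lambda_{N+1})$ between consecutive distinct unperturbed eigenvalues. Setting $F_M(y) := \sum_{|\xi|^2=M/(4\pi^2)} e^{2\pi\i\xi\cdot y}$ for the sum of eigenfunctions in the $M$-th eigenspace, I would aim to prove that, for a density-one subset of $\Lambda_\phi$,
\begin{equation*}
G_\lambda(x;x_0) \;=\; \frac{F_N(x-x_0)}{\lambda_N-\lambda} + \frac{F_{N+1}(x-x_0)}{\lambda_{N+1}-\lambda} + E_\lambda(x),
\end{equation*}
where the error $E_\lambda$ is negligible both in $L^2$ and in the bilinear sum defining $R_\eta$. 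The spectral equation \eqref{spectral eq}, whose $\lambda$-derivative is precisely $\norm{G_\lambda}_2^2$, quantitatively pins down the position of $\lambda$ inside $(\lambda_N,\lambda_{N+1})$ in terms of the multiplicities $r(N),r(N+1)$ and this $L^2$ norm. Modulo $E_\lambda$, the quantity $R_\eta(\lambda)$ becomes a weighted combination of the Fourier coefficients of $|F_N|^2/r(N)$, $|F_{N+1}|^2/r(N+1)$ and the cross-correlation $F_N\overline{F_{N+1}}/\sqrt{r(N)r(N+1)}$.

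What remains is a purely number-theoretic assertion: for a density-one subset of admissible Laplace eigenvalues $N$ and each fixed $\eta\neq 0$, the pair count $\#\{\xi:|\xi|^2=|\xi+\eta|^2=N/(4\pi^2)\}$, together with its cross-circle analogue between consecutive eigenspaces, is of strictly smaller order than the relevant multiplicity. Equivalently, the lattice points on the circles $|\xi|^2 = N/(4\pi^2)$ become angularly equidistributed along a density-one subsequence of $N$; this is known from classical arithmetic work on the angles of Gaussian integers (Erd\H{o}s--Hall and their successors). The main obstacle, I expect, will be the localisation step: bounding the off-diagonal lattice contributions $\sum_{|\xi|^2\notin\{N,N+1\}/(4\pi^2)}(\lambda_\xi-\lambda)^{-2}$ against the two adjacent blocks is delicate because the spectral equation typically places $\lambda$ at a distance from both $\lambda_N$ and $\lambda_{N+1}$ comparable to the mean spacing, so the dominance of the adjacent blocks is only marginal. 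Isolating the thin exceptional set of $\lambda$ where this dominance fails should require a second-moment argument over $\Lambda_\phi$ combined with sharp mean-value estimates for $r(n)$ in short intervals.
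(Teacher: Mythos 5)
Your opening reduction---testing against individual exponentials $e_\eta$ and then removing the dependence on $\eta$ by a diagonal/density argument---is exactly how the paper proceeds, but the route you take from there has a genuine gap at its centre, and it is the step you yourself flag as the ``main obstacle''. The decomposition $G_\lambda=\frac{F_N(x-x_0)}{\lambda_N-\lambda}+\frac{F_{N+1}(x-x_0)}{\lambda_{N+1}-\lambda}+E_\lambda$ with $E_\lambda$ negligible in $L^2$ relative to the two flanking blocks is not something the spectral equation hands you: the position of $\lambda$ inside the gap is governed by the full renormalised lattice sum (whose typical mid-gap size grows like $\log\lambda$), and deciding what fraction of $\norm{G_\lambda}_2^2$ sits on the one or two nearest circles is precisely the momentum-space localisation (``scarring'') problem discussed around \cite{BKW,KMW}---a question substantially harder than the theorem, settled at the time only conditionally (for irrational tori, assuming Berry--Tabor statistics) and not at all by a soft second-moment argument, since the off-block contribution $\sum_{|n-\lambda|\ge T}r(n)/(n-\lambda)^2\asymp 1/T$ decays too slowly to be beaten without knowing how close $\lambda$ sits to $n_N$ or $n_{N+1}$. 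The paper never needs any such identification of where the mass lives: it uses only a lower bound $\norm{G_\lambda}_2\gg\lambda^{-o(1)}$ on a density-one set of eigenvalues (from the gap bound $n_{k+1}-n_k\ll n_k^{1/4}$ of \cite{BC} plus an averaging argument, Lemmas~\ref{lem:gap} and~\ref{lem:lower bd on norm}) together with truncation of $G_\lambda$ to the wide annulus $\left||\xi|^2-\lambda\right|<\lambda^\delta$ with $\theta/2<\delta<1/2-\theta$, which contains polynomially many circles (Lemma~\ref{approx}).

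Even granting your localisation, the arithmetic endgame is miscalibrated. Angular equidistribution of lattice points on circles (Erd\H{o}s--Hall and successors) is a statement about the standard lattice $\Z^2$, where multiplicities grow along a density-one subsequence; the theorem here is for general rectangular unimodular tori, and in the irrational case the multiplicity is at most $4$, so ``equidistribution on the circle'' carries no content. What is actually needed for a fixed frequency $\zeta\neq 0$ is only that lattice points $\xi$ whose translate $\xi-\zeta$ has nearly the same norm are rare, and the paper proves exactly this by an elementary count rather than by deep arithmetic: the set $S_\zeta$ of $\xi$ with $|\langle\xi,\zeta\rangle|\le|\xi|^{2\delta}$ lies in a thin slab and has $O(X^{1/2+\delta}/|\zeta|)$ points of norm up to $X$, so after discarding a density-zero exceptional set of eigenvalues (see \eqref{def Lambda_zeta}) every $\xi$ in the truncated sum satisfies $\left||\xi-\zeta|^2-\lambda\right|\gg\lambda^\delta$, and Cauchy--Schwarz gives $\langle e_\zeta g_{\lambda,L},g_{\lambda,L}\rangle\ll\lambda^{\theta/2+o(1)-\delta}\to 0$. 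To repair your outline, replace the localisation-plus-equidistribution middle game by this Diophantine exclusion applied to the truncated Green's function; note also that your normalisation (frequencies $\xi\in\Z^2$, eigenvalues $4\pi^2|\xi|^2$) implicitly assumes the unit square torus, whereas the statement concerns a general dual lattice.
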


\noindent{\bf Remarks:}


 For the eigenfunctions of the unperturbed Laplacian,
there is a variety of possible limits in the position
representation, which were investigated by Jakobson \cite{Jakobson}.

A result of the same nature as our Theorem~\ref{Thm:intro} was
recently obtained in \cite{MR} for billiards in rational polygons.
There it is shown that for any orthonormal basis of eigenfunctions,
there is a density one subsequence which equidistributes in
configuration space. The method of \cite{MR} adapts the proof of
quantum ergodicity for billiards of \cite{ZZ} to work in
configuration space and inputs the theorem of Kerckhoff, Masur and
Smillie \cite{KMS} who showed that for rational polygons, the
billiard flow is uniquely ergodic in almost every direction. Our
argument here is completely different and is very specific to this
particular model.

A related, and in some sense complementary, issue was studied by
Berkolaiko, Keating and Winn \cite{BKW} who predict that for an
irrational torus with a point scatterer there is a subsequence of
eigenfuctions which "scar" in momentum space, and this was proved by
Keating, Marklof and Winn \cite{KMW} to be the case assuming that
the eigenvalues of the Laplacian on the unperturbed irrational torus
have Poisson spacing distribution, as is predicted by the
Berry-Tabor conjecture.

It is important to note that we (as well as \cite{BKW, KMW}) deal
with the limit of large energy $\lambda \to \infty$ for a fixed
phase $\phi\neq \pi$, which is called the weak coupling limit in the
physics literature. An interesting problem would be to understand
the strong coupling limit, where $\lambda\to \infty$ together with
$\phi\to\pi$ while $\tan(\phi/2)\approx \log \lambda$, so that the
RHS of the spectral equation \eqref{spectral eq} blows up. In that
range it has been argued that the spectrum displays intermediate
statistics \cite{Shigehara, CS, BGeS, BGiS, Rahav-Fishman}.


\noindent{\bf Acknowledgments:} We thank Maja Rudolph for her help
with the numerical investigation of some of these issues, and John
Friedlander for discussions concerning sums of two squares. Z.R. was
partially supported by the Israel Science Foundation (grant No.
1083/10). H.U. was supported by a Minerva Fellowship.


\section{The flat torus} \label{sec:torus}
\newcommand{\lat}{\mathcal L}
\newcommand{\ave}[1]{\left\langle#1\right\rangle} 

\subsection{Basic setup}
We consider a flat torus $\TT^2$ obtained by identifying opposite
sides of a rectangle with side lengths $2\pi /a$, $2\pi a$, so that
$\TT^2 = \R^2/2\pi \lat_0$ where $\lat_0 = \Z(1/a,0)\oplus \Z(0,a)$
is a unimodular lattice.

An orthonormal basis of eigenfunctions for the Laplacian $\Delta$ on
$\TT^2$ consists of the exponentials
\begin{equation}
  \frac 1{2\pi} e^{i\langle x,\xi \rangle}
\end{equation}
where $\xi$ ranges over the dual, or reciprocal, lattice
\begin{equation}
  \lat = \{x\in \R^2: \langle x,\ell \rangle \in \Z, \quad \forall
  \ell \in \lat_0 \}=\{(ma,\frac na):m,n\in \Z\}
\end{equation}

The eigenvalues of the Laplacian on $\TT^2$ are  the {\em norms}
$|\xi|^2$ of the vectors of the dual lattice $\lat$. Weyl's law for
the torus, establishing the asymptotics of the counting function
$N(x)$ of eigenvalues below $x$, is equivalent to  counting the
number of points of the lattice $\lat$ in a disk (equivalently the
number of points of the standard lattice $\Z^2$ in an ellipse), and
therefore reads
\begin{equation}\label{weyl law for torus}
  N(x) = \#\{|\xi|^2\leq x: \xi\in \lat \} = \pi x +O(x^\theta)
\end{equation}
The exponent $\theta$ in the remainder term is known to be at least
$\theta>1/4$. The ``trivial'' bound on the remainder term, as the
length of the boundary, translates into $\theta\leq 1/2$. A
nontrivial bound uses Poisson summation and the method of stationary
phase leads to $\theta\leq 1/3$. We will need a better bound
\begin{equation}
  \theta<\frac 13
\end{equation}
such as the one due to van der Corput  \cite{van der Corput}.
The current world record of $\theta \leq 131/416+o(1)$ is due to
Huxley \cite{Huxley2003}.

 Using the remainder term \eqref{weyl law for torus}, we may deduce
 a bound for the number of lattice points in an annulus: Define
 \begin{equation}
 A(\lambda,L) = \{\xi\in \lat: \lambda-L<|\xi|^2<\lambda+L\}
 \end{equation}
 Then \eqref{weyl law for torus} implies
 \begin{equation}\label{counting annulus}
\#A(\lambda,L) = 2\pi L +O(\lambda^\theta)
 \end{equation}

\subsection{Multiplicities}

Denote by $\Norm=\{0<n_1<\dots \}$ the set of norms of the dual
lattice vectors. The multiplicities in the spectrum are
\begin{equation}
  r_\lat(n) = \#\{\xi\in \lat: |\xi|^2 = n \}
\end{equation}

The lattice $\lat$ is {\em rational} if, after a suitable scaling,
the  norms $|\xi|^2$ are all  rational. The norm of an arbitrary
lattice vector $\xi = (ma,n/a)$ is
\begin{equation}
  |\xi|^2 = a^2m^2+n^2/a^2 = \frac 1{a^2}(a^4m^2+n^2)
\end{equation}
so that the lattice is rational if and only if $a^4\in \Q$ is
rational.

In the irrational case, the multiplicities are entirely due to the
reflection symmetries $(x,y)\mapsto (\pm x, \pm y)$ and are given by
\begin{equation}\label{multiplicities}
  r_\lat(n) = \begin{cases} 1,&n=0\\2,&n=a^2u^2 \mbox{ or } v^2/a^2, u,v\in \Z\\4,&\mbox{ otherwise} \end{cases}
\end{equation}
and in particular are generically equal to $4$.

For the rational case the multiplicities are complicated arithmetic
functions. For instance in the case of the standard lattice
$\lat=\Z^2$, the multiplicity $r(n) = \#\{(x,y)\in \Z^2: x^2+y^2=n
\}$ depends on the prime factorization of the integer $n$. In any
case, it is well known that we have an upper bound on the
multiplicities of the form (see e.g. the proof of \cite[Lemma
7.2]{ORW})
\begin{equation}\label{bd for mult}
  r_\lat(n) \ll n^{o(1)}
\end{equation}

The counting function of the norms is
\begin{equation}
  \Norm(x):=\#\{n\in \Norm: n\leq x\}
\end{equation}
Since we have the upper bound \eqref{bd for mult} and since we know
that the sum over $n\leq x$ of the multiplicities is asymptotically
$\pi x$ \eqref{weyl law for torus}, we deduce a lower bound
\begin{equation}\label{lower bd on N(x)}
  \Norm(x) \gg x^{1-o(1)}
\end{equation}

\subsection{Nearest neighbor gaps}
If the norms are ordered by $\Norm = \{0<n_1<n_2<\dots\}$, we need
to understand the spacings (or gaps) $n_{k+1}-n_k$ between
successive norms. The individual values are difficult to understand.
From \eqref{weyl law for torus} we certainly have $  n_{k+1}-n_k \ll
n_k^\theta$. However one can do better by arguing as follows
\cite{BC}: First find the largest integer square $ u^2<n_{k+1}/a^2$,
which  one can do so that $n_{k+1}-a^2u^2\ll \sqrt{n_{k+1}}$. After
that find the largest square $v^2/a^2< n_{k+1}-a^2u^2$, which one
can do so that $n_{k+1}-a^2u^2-v^2/a^2\ll \sqrt{ n_{k+1}-a^2u^2}\ll
n_{k+1}^{1/4}$. Thus we found a norm $n=a^2u^2+v^2/a^2  \in \Norm$
with $n<n_{k+1}$ so that $n\leq n_k$ giving
\begin{equation}\label{upper bd for gaps}
   n_{k+1}-n_k \ll n_k^{1/4}
\end{equation}

The average spacing for norms up to $x$ is, using \eqref{upper bd
for gaps},
\begin{equation}
  \frac 1{\Norm(x)} \sum_{n_k\leq x} ( n_{k}-n_{k-1} ) =
\frac{x(1+o(1))}{\Norm(x)}
\end{equation}
and by the lower bound \eqref{lower bd on N(x)} we deduce that
\begin{equation}
\frac 1{\Norm(x)} \sum_{n_k\leq x} ( n_{k}-n_{k-1} )\ll x^{o(1)}
\end{equation}
Since we are dealing with averages of positive quantities, we find:
\begin{lemma}\label{lem:gap}
  For almost all $k$, that is on a density one sequence, the spacings satisfy
  \begin{equation}
    n_{k+1}-n_k \ll n_k^{o(1)}
  \end{equation}
\end{lemma}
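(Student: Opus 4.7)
The plan is to deduce the statement directly from the averaged bound
\[
\frac{1}{\Norm(x)}\sum_{n_k\le x}(n_k-n_{k-1})\ll x^{o(1)}
\]
via Chebyshev's (Markov's) inequality, since the summands are non-negative. This is the ``averages of positive quantities'' remark in the paragraph above the lemma.

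First, fix $\epsilon>0$. Applying Markov's inequality to the partial sums gives
\[
\#\bigl\{k:n_k\le x,\ n_{k+1}-n_k>x^{\epsilon}\bigr\}\;\le\;\frac{1}{x^{\epsilon}}\sum_{n_k\le x}(n_{k+1}-n_k)\;\ll\;x^{1-\epsilon+o(1)},
\]
using that the full sum is $x(1+o(1))$. Combining with the lower bound $\Norm(x)\gg x^{1-o(1)}$ from \eqref{lower bd on N(x)}, the proportion of $k$ with $n_k\le x$ for which $n_{k+1}-n_k>x^{\epsilon}$ (and a fortiori $n_{k+1}-n_k>n_k^{\epsilon}$, after a routine dyadic refinement) is bounded by $x^{-\epsilon+o(1)}$, which tends to $0$. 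Hence for each fixed $\epsilon>0$ the set of indices where $n_{k+1}-n_k>n_k^{\epsilon}$ has density zero.

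To obtain a single density-one subsequence that works simultaneously for every $\epsilon>0$ (thus matching the $o(1)$ in the exponent), I would diagonalize over dyadic ranges. Set $x_j=2^j$ and $\epsilon_j=1/j$, and let
\[
E_j=\bigl\{k:n_k\in(x_{j-1},x_j],\ n_{k+1}-n_k>n_k^{\epsilon_j}\bigr\}.
\]
The Chebyshev estimate applied on each dyadic block yields $|E_j|\ll x_j^{1-\epsilon_j+o(1)}$, so the cumulative exceptional set $E=\bigcup_j E_j$ still satisfies $|E\cap[1,\Norm(x_J)]|=o(\Norm(x_J))$. On the complementary density-one set, if $n_k\in(x_{j-1},x_j]$ with $j\to\infty$, then $n_{k+1}-n_k\le n_k^{1/j}=n_k^{o(1)}$.

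There is no real obstacle; the only mild point is verifying that the diagonalization can indeed upgrade ``density one for each fixed $\epsilon$'' to ``density one with $\epsilon\to 0$'', which the dyadic construction above handles cleanly. Everything else is Markov's inequality combined with the inputs \eqref{lower bd on N(x)} and \eqref{upper bd for gaps} already in place.
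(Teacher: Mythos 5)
Your first step is fine and is exactly the paper's (implicit) argument: the mean gap up to $x$ is $x^{o(1)}$ by \eqref{lower bd on N(x)}, so for each \emph{fixed} $\epsilon>0$, Markov's inequality on dyadic blocks shows that $\#\{k:\ n_k\le x,\ n_{k+1}-n_k>n_k^{\epsilon}\}\ll x^{1-\epsilon+o(1)}=o(\Norm(x))$, i.e.\ the exceptional set has density zero for each fixed $\epsilon$. The gap is in your final diagonalization. With $x_j=2^j$ and $\epsilon_j=1/j$, the threshold on the $j$-th block is $n_k^{\epsilon_j}\asymp 2^{j/j}=2$, a \emph{bounded} quantity; Markov then saves only a constant factor, and your own bound $|E_j|\ll x_j^{1-\epsilon_j+o(1)}\asymp x_j$ is comparable to the size of the block, so the asserted conclusion $|E\cap\{k:n_k\le x_J\}|=o(\Norm(x_J))$ simply does not follow. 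Worse, the failure is not just in the estimate: for the standard lattice $\Z^2$ the mean gap between norms is $\sim\sqrt{\log x}/B\to\infty$ (Landau's theorem \eqref{Landau's thm}), so for large $j$ almost every gap in the block $(2^{j-1},2^j]$ exceeds $2\asymp n_k^{1/j}$, and your exceptional set $E$ has density \emph{one}, not zero. Note also that no explicit choice of $\epsilon_j$ tied to $x_j$ can be certified to work, because the only available lower bound $\Norm(x)\gg x^{1-o(1)}$ carries an unquantified $x^{o(1)}$ loss which an explicit subpolynomial threshold cannot be guaranteed to beat.

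The standard repair keeps your fixed-$\epsilon$ step and replaces the block-by-block coupling with a threshold diagonalization: for each integer $m\ge1$ let $A_m=\{k:\ n_{k+1}-n_k>n_k^{1/m}\}$, which by your Markov argument satisfies $\#\{k\in A_m:\ n_k\le x\}\ll x^{1-1/m+o(1)}=o(\Norm(x))$, hence has density zero; then choose $T_m\uparrow\infty$ so that $\#\{k\in A_1\cup\cdots\cup A_m:\ n_k\le x\}\le \Norm(x)/m$ for all $x\ge T_m$, and remove from the index set only $\bigcup_m\{k\in A_m:\ n_k>T_m\}$. The removed set has upper density at most $1/m$ for every $m$, hence density zero, and on the complement one has $n_{k+1}-n_k\le n_k^{1/m}$ whenever $n_k>T_m$, which is precisely $n_{k+1}-n_k\ll n_k^{o(1)}$. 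This is the same diagonalization the paper itself invokes (following \cite{CdV2}) in the proof of Theorem~\ref{Thm:intro}, and with it your argument becomes a complete proof along the paper's intended lines.
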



\section{point scatterers and $\delta$-potentials}
\label{sec:pt scatterer}

In this section we review the theory of point scatterers  (see
\cite{CdV}), with some details left to Appendix~\ref{sec:rigorous}.

\subsection{A finite-dimensional model}

We want to study the Schr\"odinger operator with a delta-potential
on the flat $D$-dimensional torus  $\TT^D=\R^D/2\pi \lat_0$, where
$\lat_0\subset \R^D$ is a unimodular lattice. The operator is
formally given by
\begin{equation}\label{formal operator}
-\Delta + \alpha \delta
\end{equation}
where $\delta$ is the Dirac delta-function at the point $x_0$.

To make sense of the operator \eqref{formal operator}, we say that
for a wave function $\psi\in C^\infty(\TT^D)$, multiplication by
$\delta$ should give
\begin{equation}
 \delta  \psi = \psi(x_0)\delta = \langle \psi,\delta \rangle \delta
\end{equation}
which we try to think of as a rank-one operator.

As an approximation, it is useful to examine a finite-dimensional
model: a rank one perturbation of a self-adjoint operator $H_0$ on a
finite-dimensional Hilbert space $\mathcal H$ of the form
\begin{equation}
  H = H_0 + \alpha P_v, \quad P_vu =\langle u,v \rangle v,\quad u\in \mathcal H
\end{equation}
where $0\neq v\in \mathcal H$ and $\alpha\neq 0$ is real. Let
$\phi_n$ be an orthonormal basis of $\mathcal H$ consisting of
eigenvectors  $H_0$ , with eigenvalues $\epsilon_n$: $H_0 \phi_n =
\epsilon_n \phi_n$. The eigenvectors $\phi_n$ of $H_0$ with $\langle
\phi_n,v\rangle =0$ are clearly still eigenvectors of $H$. The new
part of the spectrum is given by:

\begin{lemma}
The eigenvalues $E\notin \mbox{Spec }(H_0)$ of the perturbed
operator $H$ are the solutions of the equation
\begin{equation}\label{model eigenvalue eq}
  \langle (E-H_0)^{-1} v,v \rangle = \frac 1 \alpha
\end{equation}
or equivalently
\begin{equation}\label{divergent sum}
\sum_n \frac{ |\langle v,\phi_n \rangle |^2}{E-\epsilon_n} = \frac
1\alpha
\end{equation}
with corresponding eigenfunction
\begin{equation}\label{new eigenfunction model}
u=  (E-H_0)^{-1}v
\end{equation}
\end{lemma}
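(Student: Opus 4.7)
The plan is to manipulate the eigenvalue equation $Hu = Eu$ directly. Writing it out gives
\begin{equation*}
(E - H_0)u \;=\; \alpha \langle u,v\rangle\, v,
\end{equation*}
and since $E\notin\mathrm{Spec}(H_0)$ the operator $E - H_0$ is invertible on $\mathcal H$, so
\begin{equation*}
u \;=\; \alpha\langle u,v\rangle\, (E-H_0)^{-1} v .
\end{equation*}
This already proves the formula \eqref{new eigenfunction model} up to a scalar, and identifies the direction of every new eigenvector.

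Next I would extract a scalar equation by pairing with $v$: taking the inner product of the previous identity with $v$ yields
\begin{equation*}
\langle u,v\rangle \;=\; \alpha\langle u,v\rangle\, \langle (E-H_0)^{-1}v,\, v\rangle.
\end{equation*}
The key small observation is that $\langle u,v\rangle$ cannot vanish, for if it did the displayed relation $u=\alpha\langle u,v\rangle(E-H_0)^{-1}v$ would force $u=0$, contradicting $u$ being an eigenvector. Dividing by $\alpha\langle u,v\rangle$ gives \eqref{model eigenvalue eq}. Conversely, if $E\notin\mathrm{Spec}(H_0)$ satisfies \eqref{model eigenvalue eq}, then $u:=(E-H_0)^{-1}v$ is nonzero (since $v\neq 0$), and $\langle u,v\rangle=1/\alpha$, so
\begin{equation*}
H u \;=\; H_0 u + \alpha\langle u,v\rangle v \;=\; (Eu - v) + v \;=\; Eu,
\end{equation*}
confirming that $E$ is indeed an eigenvalue with eigenfunction \eqref{new eigenfunction model}.

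Finally, to pass to the series form \eqref{divergent sum}, I would expand $v = \sum_n \langle v,\phi_n\rangle\phi_n$ in the orthonormal eigenbasis of $H_0$. Since $(E-H_0)^{-1}\phi_n = (E-\epsilon_n)^{-1}\phi_n$ and the eigenvalues $\epsilon_n$ (and $E$, as an eigenvalue of the self-adjoint $H$) are real, one computes
\begin{equation*}
\langle (E-H_0)^{-1} v,\, v\rangle \;=\; \sum_n \frac{|\langle v,\phi_n\rangle|^2}{E - \epsilon_n},
\end{equation*}
which turns \eqref{model eigenvalue eq} into \eqref{divergent sum}. There is no real obstacle in this argument; the only point requiring a moment's care is ruling out $\langle u,v\rangle = 0$, which is exactly what distinguishes ``new'' eigenvectors from those of $H_0$ already orthogonal to $v$.
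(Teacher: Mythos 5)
Your proposal is correct and follows essentially the same route as the paper: rewrite $Hu=Eu$ as $(E-H_0)u=\alpha\langle u,v\rangle v$, invert, observe $\langle u,v\rangle\neq 0$, pair with $v$ to obtain the scalar equation, expand in the eigenbasis, and verify the converse by direct computation with $u=(E-H_0)^{-1}v$. The only cosmetic difference is that you extract the scalar equation by taking an inner product with $v$, while the paper substitutes the formula for $u$ back into the eigenvalue equation and compares the coefficient of $v$; these are interchangeable.
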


\begin{proof}
We rewrite the eigenvalue equation $Hu=Eu$ for $H$  in the form
\begin{equation}\label{eigenvalue equation}
  (E-H_0)u = \alpha \langle u,v \rangle v
\end{equation}
If $E\notin \mbox{Spec}(H_0)$ then necessarily $\langle u,v\rangle
\neq 0$ and we find that
\begin{equation}
  u = \alpha  \langle u,v \rangle (E-H_0)^{-1} v \
\end{equation}
Thus up to a scalar multiple
\begin{equation} \label{condition on u}
u=  (E-H_0)^{-1}v
\end{equation}

Substituting \eqref{condition on u} in the eigenvalue equation
\eqref{eigenvalue equation} gives
\begin{equation}
v=   \alpha \langle (E-H_0)^{-1} v,v \rangle v
\end{equation}
that is
\begin{equation}
  \langle (E-H_0)^{-1} v,v \rangle = \frac 1 \alpha
\end{equation}
Expanding $v=\sum_n \langle v,\phi_n \rangle \phi_n$ in terms of the
normalized eigenvectors $\phi_n$ of $H_0$  gives
\begin{equation}
\sum_n \frac{ |\langle v,\phi_n \rangle |^2}{E-\epsilon_n} = \frac
1\alpha
\end{equation}

Conversely, if $E\notin \mbox{Spec}(H_0)$  and \eqref{model
eigenvalue eq}  holds, take $u=(E-H_0)^{-1}v$ as in \eqref{new
eigenfunction model}. Then
$$
(H-E)u = (H_0-E)u+\alpha \langle u,v \rangle v
$$
Since $(H_0-E)u = (H_0-E)(E-H_0)^{-1}v=-v$ and $\alpha \langle u,v
\rangle v = \alpha \langle (E-H_0)^{-1}v,v\rangle v = v$ by
\eqref{model eigenvalue eq} we find that $Hu=Eu$.
\end{proof}

Now take for $H_0$ the free Schr\"odinger operator $H_0 = -\Delta$
acting  on $ C^\infty(\TT^D)\subset L^2(\TT^D)$,
 and $v=\delta_{x_0}$.
Then the eigenfunctions \eqref{new eigenfunction model} with
eigenvalues $E\notin \mbox{Spec }( H_0)$ are  the Green's function
\begin{equation}
  G_E(\bullet;x_0) = (E-H_0)^{-1} \delta_{x_0}
\end{equation}
However the sum \eqref{divergent sum} diverges; indeed,in that case
the RHS of \eqref{divergent sum}  equals
\begin{equation}\label{divergent sum2}
  \sum_n \frac{ |\phi_n(x_0) |^2}{E-\epsilon_n}
\end{equation}
which is divergent  in dimension $D>1$, by Weyl's law. In fact
\eqref{divergent sum2} is just the Green's function evaluated on the
diagonal,
which is divergent in dimension $D>1$. Thus one needs a
regularization procedure.

\subsection{Regularization}
One regularization procedure is through the theory of self-adjoint
extensions. A succinct account of this procedure is given in
\cite{CdV}. For the reader's convenience this will be reviewed in
Appendix \S~\ref{sec:rigorous}. One starts with the standard
Laplacian $\-\Delta$, for which an orthogonal basis of
eigenfunctions are the exponentials $e^{i\langle \xi,x
  \rangle}$, $\xi$ in the dual lattice $\lat$ with corresponding eigenvalue $|\xi|^2$.
The idea is that for  functions vanishing at the point $x_0$ all
candidates have to coincide with the unperturbed operator $H_0 =
-\Delta$. We want to extend it to a  bigger space. Denoting by
$-\Delta_{x_0}$ the unperturbed operator restricted to
$C_c^\infty(\TT^D\backslash\{x_0\})$, one finds that the adjoint has
as its domain $ \dom(-\Delta_{x_0}^* )$ the Sobolev space
$H^2(\TT^D\backslash\{x_0\})$. In dimension $D=2,3$ this
equals\footnote{In dimension $D=1$ one wants $\Delta
  f-c_0\delta-c_1\delta'\in  L^2(\TT^1)$, while in dimensions $D\geq
  4$, $H_0$ is essentially self adjoint and there are no self-adjoint
  extensions.}
the space of $f\in L^2(\TT^D)$ for which   $\exists A\in \C$  s.t.
\begin{equation}\label{domain adjoint}
\Delta f-A \delta_{x_0} \in L^2(\TT^D)
\end{equation}
and for such $f$, there is some $B\in \C$ so that for $x$ near
$x_0$,
\begin{equation}
f(x) = A G^{(D)}(|x-x_0|)+B +o(1),
\end{equation}
where
\begin{equation}
 G^{(D)}(r) = \begin{cases} \frac 1{2\pi} \log r,& D=2\\
-\frac 1{4\pi  r},& D=3
\end{cases}
 \;.
\end{equation}

One finds that in dimension $D=2,3$ there is a one-parameter family
of extensions, parameterized by $\phi \in \R/2\pi \Z\simeq U(1)$,
denoted by $-\Delta_{\phi,x_0}$ with domain
given by $f\in  \dom(-\Delta_{x_0}^* )$ for which there is some
$a\in\C$ with
\begin{equation}
f(x) = a\left( \cos \frac \phi 2 \cdot  G^{(D)}(|x-x_0|) + \sin
\frac \phi 2\right) +o(1), \quad x\to x_0
\end{equation}
The action of $-\Delta_{\phi,x_0}$ on $f$ satisfying \eqref{domain
adjoint} is then given by
\begin{equation}
 - \Delta_{\phi,x_0}f = -\Delta f +A \delta_{x_0}
\end{equation}
The divergent equation \eqref{divergent sum} is replaced by the
convergent equation for the new eigenvalues $\lambda\notin
\sigma(\Delta)$:
\begin{equation}\label{new eigenvalue equation}
  \sum_{\xi\in \lat} \left\{ \frac 1{|\xi|^2-\lambda} - \frac
      {|\xi|^2}{|\xi|^4+1} \right\}  = c_0 \tan \frac \phi 2
\end{equation}
where
$$c_0=\sum_{\xi\in\lat}\frac{1}{|\xi|^4+1}.$$
We can rewrite \eqref{new eigenvalue equation} as
\begin{equation}\label{new eigenvalue equation bis}
  \sum_{n\in \Norm} r_\lat(n)\left\{ \frac 1{n-\lambda}-\frac{n}{n^2+1}
  \right\} = c_0 \tan \frac \phi 2
\end{equation}
where
$$r_\lat(n)=\#\{n=|\xi|^2:\xi\in \lat\}$$
and $\Norm$ is the set of norms of vectors in $\lat$ (without
multiplicity). The corresponding eigenfunction is a multiple of
$G_\lambda(x;x_0) = (\Delta+\lambda)^{-1}\delta_{x_0}$, which has
the $L^2$-expansion
\begin{equation}
G_{\lambda}(x,x_{0})=-\frac{1}{4\pi^2}\sum_{\xi\in\lat}\frac{\exp(\i\xi\cdot
(x-x_{0}))}{|\xi|^{2}-\lambda},\qquad x\neq x_{0}.
\end{equation}

As may be seen from \eqref{new eigenvalue equation bis}, the new
eigenvalues interlace between the sequence $\Norm=\{n_1<n_2<\dots\}$
of norms, see Figure~\ref{figspectral}. We may thus label the
perturbed eigenvalues $\lambda_k=\lambda_k^\phi$ so that
\begin{equation}\label{order the evs}
  n_1<\lambda_1<n_2<\lambda_2<\dots<n_k<\lambda_k<n_{k+1}<\dots
\end{equation}
\begin{figure}[h]
\begin{center}
  \includegraphics[width=110mm]
{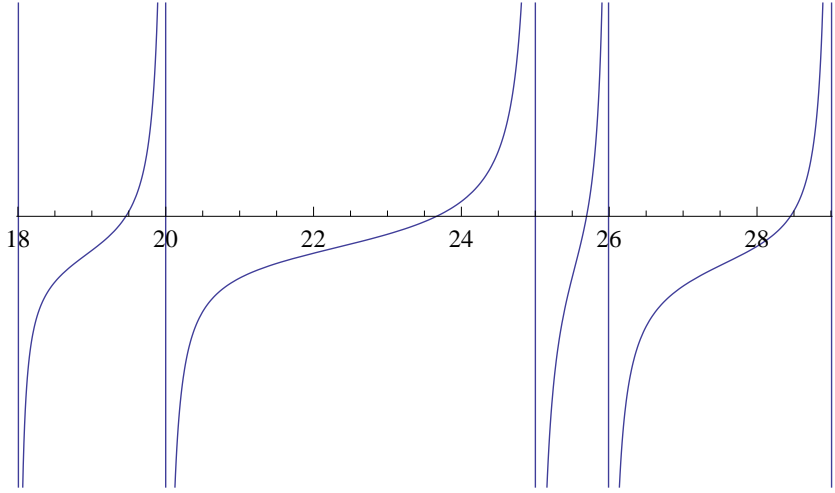}
 \caption{ A plot of the spectral function on the LHS
 of the eigenvalue equation \eqref{new eigenvalue equation bis} for the standard lattice $\lat=\Z^2$.
 The intersections of the plot with the $x$-axis are the perturbed
 eigenvalues corresponding to $\phi=0$,
 alternating with the norms.}
 \label{figspectral}
\end{center}
\end{figure}

\subsection{The density of states}
The density of states of the perturbed eigenvalues depends strongly
on the particular torus, that is on the lattice it determines.

For the standard lattice $\Z^2$, a theorem of Landau \cite{Landau}
asserts that
\begin{equation}
\label{Landau's thm}
  \#\{n\in \Norm, n\leq x\} \sim B\frac{x}{\sqrt{\log x}},
 \end{equation}
where  $B=\frac 1{\sqrt{2}} \prod (1-p^{-2})^{-1/2} = 0.764\dots$,
the product over primes $p=3 \bmod 4$. Consequently we deduce a form
of Weyl's law for the perturbed spectrum $\Lambda_\phi$ of
$\Delta_{\phi,x_0}$ for the standard lattice:
\begin{equation}\label{Weyl's law}
  \#\{\lambda\in \Lambda_\phi: \lambda \leq x \} \sim B\frac{x}{\sqrt{\log x}}
\end{equation}

In the irrational case, the multiplicities are typically $4$, see
\eqref{multiplicities}. Then Weyl's law in those cases would read as
\begin{equation}\label{irrational Weyl's law}
  \#\{\lambda\in \Lambda_\phi: \lambda \leq x \} \sim \frac \pi 4 x
\end{equation}

\section{The norm of $G_\lambda$}
We will need a lower bound on the $L^2$-norm of the Green's function
$G_\lambda$. We are able to get a good bound for a sub-sequence of
density one. To define this subsequence, we recall our discussion of
the gaps between consecutive norms.


According to Lemma~\ref{lem:gap}, for almost all $k$ we have
\begin{equation}\label{small gaps}
   n_{k+1}-n_k \ll n_k^{o(1)}
\end{equation}

We define the set $\Lambda_g\subset \Lambda$ of eigenvalues
$\lambda_k$ (using the labeling \eqref{order the evs}) so that
\eqref{small gaps} holds:
\begin{equation} \label{def Lambda_g}
\Lambda_g=\{\lambda_k\in \Lambda: n_{k+1}-n_k \ll \lambda_k^{o(1)}
\}
\end{equation}
By the discussion above, this is a set of density one in $\Lambda$
(and conjecturally all of $\Lambda$).


\begin{lemma}\label{lem:lower bd on norm}
  For $\lambda\in \Lambda_g$ (i.e. for almost all $\lambda$), we have
  \begin{equation}\label{lowerbound for |G|}
    ||G_\lambda|| \gg \frac 1{\lambda^{o(1)}}
  \end{equation}
\end{lemma}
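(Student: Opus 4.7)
The plan is to express $\|G_\lambda\|^2$ explicitly via the Fourier expansion and then to isolate a single dominant term coming from a norm very close to $\lambda$.

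First, I would invoke the $L^2$-expansion
\[
G_\lambda(x;x_0) = -\frac{1}{4\pi^2}\sum_{\xi\in\lat}\frac{e^{\i\xi\cdot(x-x_0)}}{|\xi|^2-\lambda}
\]
and apply Parseval with respect to the orthonormal basis $\{\frac{1}{2\pi}e^{\i\langle x,\xi\rangle}\}_{\xi\in\lat}$. This rewrites
\[
\|G_\lambda\|_2^2 \;=\; \frac{1}{4\pi^2}\sum_{\xi\in\lat}\frac{1}{(|\xi|^2-\lambda)^2} \;=\; \frac{1}{4\pi^2}\sum_{n\in\Norm}\frac{r_\lat(n)}{(n-\lambda)^2},
\]
up to an overall positive constant that only depends on the area of $\TT^2$. (The expansion is valid because $G_\lambda$ lies in $L^2(\TT^2)$, as the $\log|x-x_0|$ singularity is locally square integrable in dimension two.)

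Next, for $\lambda=\lambda_k\in\Lambda_g$ the interlacing \eqref{order the evs} gives $n_k<\lambda_k<n_{k+1}$, and by the definition \eqref{def Lambda_g} of $\Lambda_g$ the gap satisfies $n_{k+1}-n_k\ll\lambda_k^{o(1)}$. In particular, at least one of $|n_k-\lambda_k|$, $|n_{k+1}-\lambda_k|$ is at most $\tfrac{1}{2}(n_{k+1}-n_k)\ll\lambda_k^{o(1)}$. Call the minimizing norm $n_\ast\in\{n_k,n_{k+1}\}$. Dropping all other terms (which are nonnegative) and using $r_\lat(n_\ast)\geq 1$, we get
\[
\|G_\lambda\|_2^2 \;\gg\; \frac{r_\lat(n_\ast)}{(n_\ast-\lambda_k)^2} \;\gg\; \frac{1}{\lambda_k^{o(1)}},
\]
which yields the desired bound $\|G_\lambda\|\gg\lambda^{-o(1)}$.

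The whole argument is quite short because the work has already been done in Lemma~\ref{lem:gap}: the only nontrivial input is the small-gap estimate on a density-one subsequence, which forces $\lambda_k$ to sit within a near-zero distance of some lattice norm, making a single diagonal term in the Parseval expansion enormous. There is no real obstacle; the one thing to be a bit careful about is the lower bound on the diagonal term uses only a single norm (no arithmetic lower bounds on multiplicities are needed), so the argument works uniformly for both the rational and irrational tori.
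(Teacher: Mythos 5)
Your proposal is correct and follows essentially the same route as the paper: Parseval reduces $\|G_\lambda\|_2^2$ to the lattice sum $\sum_n r_\lat(n)/(n-\lambda)^2$, and one keeps a single term at a norm adjacent to $\lambda$, whose distance to $\lambda$ is $\ll\lambda^{o(1)}$ by the definition of $\Lambda_g$ via Lemma~\ref{lem:gap}. The only cosmetic difference is that the paper simply bounds $|n_k-\lambda|$ by the full gap $n_{k+1}-n_k$ rather than picking the nearer of the two neighbors, which changes nothing.
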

\begin{proof}
Let $n_k=n_k(\lambda)$, $n_{k+1}=n_{k+1}(\lambda)$ be consecutive
norms so that $n_k<\lambda<n_{k+1}$. Then trivially
\begin{equation}
\left\|G_{\lambda}\right\|^{2}_{2}\gg \sum_{n\in \Norm}
\frac{r_\lat(n)}{(n-\lambda)^2} \geq \frac 1{(n_k-\lambda)^2} >
\frac 1{(n_{k+1}-n_k)^2}
\end{equation}
Since for $\lambda\in \Lambda_g$ we know that $n_{k+1}-n_k\ll
n_k^{o(1)}$,   \eqref{lowerbound for |G|} follows.
\end{proof}

It is natural to conjecture that \eqref{lowerbound for |G|} holds
for all $\lambda$.


\section{Truncation}

For $L>0$ let $A(\lambda,L)$ be the set of lattice points in the
annulus $\lambda-L<|x|^2<\lambda+L$:
\begin{equation}\label{annulus}
  A(\lambda,L) = \{\xi\in \lat: \left| |\xi|^2-\lambda \right|< L\}
\end{equation}
We denote the truncated Green's function by
\begin{equation}
G_{\lambda,L}(x,x_{0})=-\frac{1}{4\pi^2}\sum_{\xi\in
A(\lambda,L)}\frac{\exp(\i\xi\cdot (x-x_{0}))}{|\xi|^{2}-\lambda}.
\end{equation}
Let $g_\lambda$ and $g_{\lambda,L}$ be the $L^2$-normalized Green's
function and its truncation:
\begin{equation}
  g_\lambda = \frac{G_\lambda}{||G_\lambda||} ,\quad
 g_{\lambda,L} = \frac{G_{\lambda,L}}{||G_{\lambda,L}||}
\end{equation}

We have the following approximation.

\begin{lemma}\label{approx}
Let $L=\lambda^{\delta}$, $\theta/2<\delta<1$. As $\lambda\to\infty$
along $\Lambda_g$,
$$\left\|g_{\lambda}-g_{\lambda,L}\right\|_{2}\to 0.$$
\end{lemma}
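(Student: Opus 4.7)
The plan is to apply Parseval's identity, which converts $\|G_\lambda-G_{\lambda,L}\|_2^2$ into a tail sum $\sum_{\xi\notin A(\lambda,L)}(|\xi|^2-\lambda)^{-2}$ over dual-lattice points, estimate this tail by dyadic decomposition into annular shells to which the counting bound \eqref{counting annulus} applies, and finally normalize using the lower bound on $\|G_\lambda\|$ from Lemma~\ref{lem:lower bd on norm}.

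More precisely, since the exponentials $e^{\i\xi\cdot x}/(2\pi)$ form an orthonormal basis of $L^2(\TT^2)$, Plancherel gives
$$\|G_\lambda-G_{\lambda,L}\|_2^2 = \frac{1}{4\pi^2}\sum_{\xi\notin A(\lambda,L)}\frac{1}{(|\xi|^2-\lambda)^2}.$$
I would split the complement of $A(\lambda,L)$ into a ``near'' range $L\le \left||\xi|^2-\lambda\right|\le \lambda/2$ and a ``far'' range $\left||\xi|^2-\lambda\right|\ge \lambda/2$. On the near range, decomposing dyadically into shells $A(\lambda,2^{k+1}L)\setminus A(\lambda,2^kL)$ with $k\ge 0$ and applying \eqref{counting annulus} shows that the $k$-th shell has $O(2^kL+\lambda^\theta)$ points, each contributing $(2^kL)^{-2}$; summing the resulting geometric series yields $O(L^{-1}+\lambda^\theta L^{-2})$. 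On the far range the denominator is at least $\lambda^2/4$, and a short calculation (splitting further into $|\xi|^2>3\lambda/2$, where $\sum|\xi|^{-4}=O(\lambda^{-1})$, and $|\xi|^2<\lambda/2$, where only $O(\lambda)$ points contribute) bounds the sum by $O(\lambda^{-1})$. With $L=\lambda^\delta$ and $\theta/2<\delta<1$, this gives $\|G_\lambda-G_{\lambda,L}\|_2^2=O(\lambda^{-\eta})$ for $\eta=\min(\delta,2\delta-\theta,1)>0$.

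To pass to the normalized functions, I would observe that $G_{\lambda,L}$ is the orthogonal projection of $G_\lambda$ onto the span of the Fourier modes indexed by $A(\lambda,L)$, so Pythagoras gives $\|G_\lambda\|^2=\|G_{\lambda,L}\|^2+\|G_\lambda-G_{\lambda,L}\|^2$. Combining this with Lemma~\ref{lem:lower bd on norm}, which supplies $\|G_\lambda\|\gg\lambda^{-o(1)}$ for $\lambda\in\Lambda_g$, one obtains both $\|G_\lambda-G_{\lambda,L}\|_2/\|G_\lambda\|\to 0$ and $\|G_{\lambda,L}\|/\|G_\lambda\|\to 1$. The triangle inequality applied to
$$g_\lambda-g_{\lambda,L} = \frac{G_\lambda-G_{\lambda,L}}{\|G_\lambda\|}+G_{\lambda,L}\left(\frac{1}{\|G_\lambda\|}-\frac{1}{\|G_{\lambda,L}\|}\right)$$
then finishes the argument.

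The only delicate point is the balance of exponents. The annulus-counting error $\lambda^\theta/L^2$ in the tail estimate forces $\delta>\theta/2$, which is exactly the hypothesis, while the $\lambda^{o(1)}$ loss inherent in the density-one lower bound for $\|G_\lambda\|$ is comfortably absorbed by the polynomial gain $\lambda^{-\eta}$ from the tail estimate. The restriction $\lambda\in\Lambda_g$ enters only through Lemma~\ref{lem:lower bd on norm}; the tail estimate itself is uniform in $\lambda$.
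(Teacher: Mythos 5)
Your proposal is correct and follows essentially the same route as the paper: Plancherel reduces the problem to the tail sum $\sum_{\,|\,|\xi|^2-\lambda|\ge L}(|\xi|^2-\lambda)^{-2}$, which is bounded by $O(L^{-1}+\lambda^{\theta}L^{-2})$ using the lattice count, and the passage to the normalized functions is handled by the same triangle-inequality decomposition combined with the lower bound of Lemma~\ref{lem:lower bd on norm} on $\Lambda_g$. The only difference is cosmetic: you estimate the tail by dyadic annuli via \eqref{counting annulus}, whereas the paper performs summation by parts against the Weyl-law remainder \eqref{weyl law for torus}; both yield the same bound and the same exponent condition $\delta>\theta/2$.
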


\begin{proof}
Note that
\begin{equation}
\begin{split}
\left\|\frac{G_\lambda}{\left\|G_\lambda\right\|_2}-\frac{G_{\lambda,L}}{\left\|G_{\lambda,L}\right\|_2}\right\|_2
\leq&\; \frac{\left\|G_\lambda-G_{\lambda,L}\right\|_2}{\left\|G_\lambda\right\|_2}\\
&+\left\|G_{\lambda,L}\right\|_2\left|\frac{1}{\left\|G_\lambda\right\|_2}-\frac{1}{\left\|G_{\lambda,L}\right\|_2}\right|\\
\leq&\;
2\,\frac{\left\|G_\lambda-G_{\lambda,L}\right\|_2}{\left\|G_\lambda\right\|_2}.
\end{split}
\end{equation}


We have
\begin{equation}
  \left\|G_{\lambda}-G_{\lambda,L}\right\|_{2}^{2}=
\frac{1}{16\pi^4} \sum_{||\xi|^2-\lambda|\geq\lambda^\delta}
\frac{1}{(|\xi|^2-\lambda)^2}
\end{equation}
We recall how to evaluate lattice sums using summation by parts:

Let $n_1<n_2<\dots$ be the set of norms, and
\begin{equation}
  N(t) = \sum_{n_k\leq t} r_\lat(n_k)
\end{equation}
Then for a smooth function $f(t)$ on $\R$ we have
\begin{equation}
  \sum_{n_A<|\xi|^2 \leq n_B} f(|\xi|^2) =
N(n_B)f(n_B)-N(n_A)f(n_{A+1})- \int_{n_{A+1}}^{n_B} f'(t)N(t)dt
\end{equation}
Now use the lattice count with remainder \eqref{weyl law for torus}
to get
\begin{multline}\label{sum by parts}
  \sum_{n_A<|\xi|^2 \leq n_B} f(|\xi|^2) =
\pi\int_{n_{A+1}}^{n_B} f(t)dt \\ +O(n_B^\theta f(n_B) +
n_{A+1}^\theta f(n_A)) +O(\int_{n_{A+1}}^{n_B} |f'(t)|t^\theta dt)
\end{multline}


Applying \eqref{sum by parts} with $f(t)=1/(t-\lambda)^2$, once with
$n_A=n_1$ and $n_B\leq \lambda-L<n_{B+1}$ and then with
$n_{A-1}<\lambda+L\leq n_{A}<n_{A+1}$ and $n_B=\infty$ gives
\begin{equation}
\left\|G_{\lambda}-G_{\lambda,L}\right\|_{2}^{2} \ll \frac 1{L} +
\frac{\lambda^\theta}{L^2}
\end{equation}

Since for $\lambda\in \Lambda_g$ we have $||G_\lambda||^2\gg
1/\lambda^{o(1)}$ by Lemma~\ref{lem:lower bd on norm}, we find
$$
\frac{\left\|G_{\lambda}-G^{L}_{\lambda}\right\|_{2}^{2}}
{\left\|G_{\lambda}\right\|_{2}^{2}}\ll \lambda^{o(1)} \left(\frac
1{L} + \frac{\lambda^\theta}{L^2} \right)
$$
which tends to zero if $\delta>\theta/2$.
\end{proof}

Consequently we may study the matrix elements by replacing
$g_\lambda$ by the truncated version $g_{\lambda,L}$:
\begin{lemma}\label{approxtrunc}
Let $f\in C^\infty(\TT^2)$ and $L=\lambda^\delta$,
$\theta/2<\delta<1$. We have $$|\left\langle f\,
g_\lambda,g_\lambda\right\rangle-\left\langle f\,
g_{\lambda,L},g_{\lambda,L}\right\rangle|\to0$$ as
$\lambda\to\infty$ along $\Lambda_g$.
\end{lemma}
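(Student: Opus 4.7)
The plan is to reduce the statement directly to Lemma~\ref{approx} by a Cauchy--Schwarz style telescoping argument, using that both $g_\lambda$ and $g_{\lambda,L}$ are unit vectors in $L^2(\TT^2)$ and that any $f\in C^\infty(\TT^2)$ is bounded, since $\TT^2$ is compact.

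The first step is the algebraic identity
\begin{equation*}
\langle f\,g_\lambda,g_\lambda\rangle - \langle f\,g_{\lambda,L},g_{\lambda,L}\rangle
= \langle f\,g_\lambda,\, g_\lambda - g_{\lambda,L}\rangle
+ \langle f\,(g_\lambda - g_{\lambda,L}),\, g_{\lambda,L}\rangle,
\end{equation*}
which cleanly separates the two factors in which we need to pass from $g_\lambda$ to $g_{\lambda,L}$.

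Next I would bound each of the two inner products by Cauchy--Schwarz combined with the estimate $\|f\,h\|_2\leq \|f\|_\infty \|h\|_2$, valid for $f\in C^\infty(\TT^2)\subset L^\infty(\TT^2)$. Since $\|g_\lambda\|_2 = \|g_{\lambda,L}\|_2 = 1$ by construction, the two terms are each controlled by $\|f\|_\infty \|g_\lambda - g_{\lambda,L}\|_2$, giving the total bound
\begin{equation*}
\bigl|\langle f\,g_\lambda,g_\lambda\rangle - \langle f\,g_{\lambda,L},g_{\lambda,L}\rangle\bigr|
\leq 2\,\|f\|_\infty\, \|g_\lambda - g_{\lambda,L}\|_2.
\end{equation*}

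The final step is to invoke Lemma~\ref{approx}: for any $\delta\in (\theta/2,1)$ and $L=\lambda^\delta$, the right-hand side tends to zero as $\lambda\to\infty$ along $\Lambda_g$, which is exactly the conclusion. There is no real obstacle here; the content is packed into Lemma~\ref{approx} (hence into the lower bound of Lemma~\ref{lem:lower bd on norm} for $\|G_\lambda\|$ on the density-one subsequence $\Lambda_g$), and the present lemma is a routine consequence.
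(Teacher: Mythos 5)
Your proposal is correct and follows essentially the same route as the paper: the paper also telescopes the difference of matrix elements, bounds each piece using the boundedness of multiplication by $f$ on $L^2(\TT^2)$ together with the unit norms of $g_\lambda$ and $g_{\lambda,L}$, and then invokes Lemma~\ref{approx}. No substantive difference.
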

\begin{proof}
Let $f\in C^\infty(\TT^2)$. We define the multiplication operator
$M_f:L^2(\TT^2)\to L^2(\TT^2)$ by $$M_f(g)=f\,g.$$ Since $M_f$ is a
continuous operator on $L^2(\TT^2)$, we have that
$$\left\|g_\lambda-g_{\lambda,L}\right\|_2\to0$$ for $\lambda\in
\Lambda_g$ implies
$$\left\|M_f(g_\lambda-g_{\lambda,L})\right\|_2\to0$$ and hence $$|\left\langle M_f\,g_\lambda,g_\lambda-g_{\lambda,L}\right\rangle|\leq\left\|M_f\right\|_\infty\left\|g_\lambda-g_{\lambda,L}\right\|\to0.$$
If we repeat this, where we switch $g_\lambda$ and $g_{\lambda,L}$,
we obtain $$|\left\langle M_f
g_\lambda,g_\lambda\right\rangle-\left\langle M_f
g_{\lambda,L},g_{\lambda,L}\right\rangle|\to0.$$
\end{proof}

\section{Exceptional eigenvalues and a Diophantine inequality}

Fix a nonzero  vector $0\neq \zeta \in \lat$, and $\delta\in(\tfrac
\theta2,\tfrac 12 -\theta )$ (such $\delta$ exists because
$\theta<1/3$). Let $S_\zeta$ be the set of vectors satisfying
\begin{equation}
  S_\zeta = \{\eta\in \lat : |\langle \eta,\zeta   \rangle |\leq |\eta|^{2\delta} \}
\end{equation}
We define a subset $\Lambda_\zeta \subset \Lambda$ of eigenvalues
\begin{equation}\label{def Lambda_zeta}
\Lambda_\zeta = \{\lambda\in \Lambda:  A(\lambda,\lambda^\delta)\cap
S_\zeta = \emptyset \}
\end{equation}
(recall that $A(\lambda,L)$ are the lattice points $\eta\in \lat$ in
the annulus \eqref{annulus}). Our goal in this section is to show
that
\begin{proposition}
$\Lambda_\zeta$ is a subset of density one in $\Lambda$.
\end{proposition}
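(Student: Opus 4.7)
My plan is to show that the bad set $\Lambda \setminus \Lambda_\zeta$ has density zero in $\Lambda$, by counting its elements below $X$ via a swap of summation. By definition, $\lambda$ is bad exactly when there exists $\eta \in S_\zeta$ with $\bigl||\eta|^2 - \lambda\bigr| < \lambda^\delta$, so
\begin{equation*}
\#\bigl((\Lambda \setminus \Lambda_\zeta) \cap [0,X]\bigr) \;\leq\; \sum_{\substack{\eta \in S_\zeta\\ |\eta|^2 \leq 2X}} \#\Bigl\{\lambda \in \Lambda : \bigl||\eta|^2 - \lambda\bigr| \leq X^\delta\Bigr\}.
\end{equation*}

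The key ingredient will be the estimate
\begin{equation*}
|S_\zeta \cap \{|\eta|^2 \leq 2X\}| \ll X^{1/2 + \delta}.
\end{equation*}
I would prove this by dyadic decomposition in $R = |\eta|$: the condition $|\langle \eta, \zeta\rangle| \leq |\eta|^{2\delta}$ confines $\eta$ to a strip of width $O(R^{2\delta}/|\zeta|)$ about the line through $0$ perpendicular to $\zeta$, and after rotating so this strip aligns with the axes the region $\{|\eta| \sim R\} \cap S_\zeta$ fits into a rectangle of dimensions $O(R^{2\delta}) \times O(R)$. A standard lattice-point-in-convex-body estimate gives $O(R^{1+2\delta})$ points, and summing over dyadic $R \leq \sqrt{2X}$ yields the displayed bound.

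For the inner count I use the spectral interlacing \eqref{order the evs}: there is exactly one $\lambda \in \Lambda$ in each gap $(n_k,n_{k+1})$. Hence the number of $\lambda$'s in $(|\eta|^2 - X^\delta, |\eta|^2 + X^\delta)$ is at most one plus the number of norms in a slightly larger interval, which by \eqref{counting annulus} is $\ll X^\delta + X^\theta$. Combining,
\begin{equation*}
\#\bigl((\Lambda \setminus \Lambda_\zeta) \cap [0,X]\bigr) \;\ll\; X^{1/2+\delta}(X^\delta + X^\theta) \;=\; X^{1/2+2\delta} + X^{1/2+\delta+\theta}.
\end{equation*}
The hypothesis $\delta < 1/2-\theta$ makes the second exponent strictly less than $1$; the first is also $<1$ provided $\delta < 1/4$, which is easily arranged by taking $\delta$ close to $\theta/2$ (using $\theta<1/3$). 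Since $\#(\Lambda \cap [0,X]) \gg X^{1-o(1)}$ by \eqref{Weyl's law} or \eqref{irrational Weyl's law}, the density of $\Lambda \setminus \Lambda_\zeta$ in $\Lambda$ is zero.

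The hard part is the lattice-point count for $S_\zeta$: the strip's half-width $|\eta|^{2\delta}$ grows with $|\eta|$, so one really does need the dyadic decomposition (or a careful direct volume argument) to keep the bound sharp. Everything else follows from previously established counting estimates.
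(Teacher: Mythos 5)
Your argument is correct and is essentially the paper's proof: the same double count of the bad set, resting on the same two ingredients, namely the rectangle estimate $\#\{\eta\in S_\zeta:|\eta|^2\le X\}\ll X^{1/2+\delta}$ and the annulus count \eqref{counting annulus} combined with the interlacing \eqref{order the evs}; the paper merely organizes the count via the map sending each bad $\lambda$ to the nearest norm of an element of $S_\zeta$, which produces the single term $X^{1/2+\delta+\theta}$. Two small corrections: the dyadic decomposition is not needed, since for $|\eta|^2\le 2X$ one has $|\langle \eta,\zeta\rangle|\le |\eta|^{2\delta}\le (2X)^{\delta}$ uniformly, so all of $S_\zeta$ up to that height lies in a single rectangle of dimensions $O(X^{\delta}/|\zeta|)\times O(X^{1/2})$ (as in the paper); and the proviso $\delta<1/4$ imposes no extra restriction, because any admissible remainder exponent satisfies $\theta\ge \tfrac14$, whence $\delta<\tfrac12-\theta\le\tfrac14$ automatically (equivalently $X^{\delta}\ll X^{\theta}$, collapsing your bound to the paper's $X^{1/2+\delta+\theta}$). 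Finally, for the denominator it is safer to quote the general lower bound \eqref{lower bd on N(x)} together with the interlacing, since \eqref{Weyl's law} and \eqref{irrational Weyl's law} as stated cover only the standard and the irrational lattices, not a general rational one.
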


\begin{proof}
Let
\begin{equation}
  B_\zeta = \Lambda\backslash \Lambda_\zeta=\{\lambda\in \Lambda:  A(\lambda,\lambda^\delta)\cap S_\zeta \neq \emptyset \}
\end{equation}
We will show that $B_\zeta$ has density zero in the set $\Lambda$ of
all perturbed eigenvalues, in fact
\begin{equation}
\#\{\lambda \in B_\zeta: \lambda \leq X \}  \ll
\frac{X^{1-\delta'}}{|\zeta|}
\end{equation}
for $\delta'=1/2-\theta-\delta>0$.

 We first show that
 \begin{lemma}\label{counting S_zeta}
 \begin{equation}
    \# \{\eta \in S_\zeta : |\eta|^2 \leq X\}
\ll \frac{X^{\frac 12+\delta}}{|\zeta|}
  \end{equation}
 \end{lemma}
 \begin{proof}
Introduce cartesian coordinates with one of the axes in the
direction of the vector $\zeta$ by writing every $x\in \R^2$ as
\begin{equation}
  x = u \frac{\zeta}{|\zeta|} + v \frac {\zeta^\perp}{|\zeta^\perp|}
\end{equation}
where if $\zeta = (ap,\frac qa)$ then $\zeta^\perp = (-\frac qa,
pa)$ is a vector orthogonal to $\zeta$. In these coordinates,
\begin{equation}
  \langle x,\zeta \rangle = u|\zeta|\;, \quad |x|^2 = u^2+v^2
\end{equation}
and our set of lattice points is thus contained in the rectangle
\begin{equation}
  \mathcal R = \{u \frac{\zeta}{|\zeta|} + v \frac
         {\zeta^\perp}{|\zeta^\perp|}: |u|\leq  \frac{X^\delta}{|\zeta|},\quad
         |v|\leq X^{1/2} \}
\end{equation}
Now estimating the number of lattice points in a rectangle is a
simple matter: Putting a fundamental domain $\mathcal F =
\{(ax,y/a):0\leq x,y\leq 1\}$ for the lattice (which has unit area
for the case at hand) centered around each lattice point in
$\mathcal R$, we get a figure whose area is the number of lattice
points in question, and which is contained in a slightly bigger
rectangle whose dimensions are expanded by the diameter $d_\lat =
\sqrt{a^2+1/a^2}$ of $\mathcal F$:
\begin{equation}
  \mathcal R^+ = \{u \frac{\zeta}{|\zeta|} + v \frac
         {\zeta^\perp}{|\zeta^\perp|}: |u|\leq
         \frac{X^\delta}{|\zeta|}+d_\lat ,\quad
         |v|\leq X^{1/2} +d_\lat \}
\end{equation}
Thus we see that $\#\lat \cap \mathcal R$ is bounded by the area of
$\mathcal R^+$, which is:
\begin{equation}
   \area \mathcal R^+ =
  2(\frac{X^{\delta}}{|\zeta|}+d_\lat)\times 2(X^{1/2}+d_\lat)
  =\frac {4X^{1/2 +\delta}}{|\zeta|}+O_\lat(X^{1/2})
\end{equation}
Therefore
\begin{equation}
  \#\{\eta\in S_\zeta:|\eta|^2\leq X \} \leq \area \mathcal R^+ =
\frac{ 4X^{1/2+\delta}}{|\zeta|}+O_\lat(X^{1/2})
\end{equation}
as claimed.
\end{proof}

 Next we define $\Norm_\zeta\subset \Norm$ to be the set of norms $|\eta|^2$ of $\eta\in
S_\zeta$, without multiplicities. We clearly have
\begin{equation}
  \#\{n\in \Norm_\zeta:n\leq X \}  \leq \#\{\eta\in  S_\zeta:|\eta|^2\leq X\}  \ll \frac{X^{1/2+\delta}}{|\zeta|}
\end{equation}
by  Lemma~\ref{counting S_zeta}.

We have a map
\begin{equation}
  \iota: B_\zeta \to \Norm_\zeta
\end{equation}
defined by $\iota(\lambda)$ being the closest $n\in \Norm_\zeta$ to
$\lambda$; if there are two such elements, i.e. $n_-<\lambda<n_+$
with $n_\pm \in \Norm_\zeta$ and $n_+-\lambda=\lambda-n_-$, then set
$\iota(\lambda)=n_+$. Thus we get a well defined map, whose fibers
satisfy
\begin{equation*}
  \iota^{-1}(n) \subseteq \{\lambda\in \Lambda: \exists \eta\in  S_\zeta \cap A(\lambda,\lambda^\delta), |\eta|^2=n \}
  \subseteq \Lambda \cap [n-2n^\delta,n+2n^\delta]
\end{equation*}
 for $n\gg 1$.

Since $\Lambda$ is interlaced between the norms $\Norm$, we have
\begin{equation}
  \#\Lambda \cap [n-2n^\delta,n+2n^\delta] \ll
  \sum_{n-3n^\delta<m<n+3n^\delta} r_\lat(m) =\#A(n,3n^\delta)
\end{equation}
which is the number of lattice points in an annulus. By
\eqref{counting annulus},
\begin{equation}
\#A(n,3n^\delta)\ll  n^\delta+n^\theta
\end{equation}
and hence (since $\delta<\theta$)
\begin{equation}
\#\iota^{-1}(n)\ll n^\theta
\end{equation}
We thus find
\begin{equation}
  \begin{split}
 \#\{\lambda\in B_\zeta:\lambda \leq X\}  &=
\sum_{\substack{n\in \Norm_\zeta\\n\leq X}} \#\iota^{-1}(n) \\
&\ll  X^\theta \#\{n\in \Norm_\zeta:n\leq X\} \ll \frac{
X^{1/2+\delta+\theta} }{|\zeta|}
  \end{split}
\end{equation}
That is
\begin{equation}
   \#\{\lambda\in B_\zeta:\lambda \leq X\}\ll
 \frac{ X^{1-\delta'} }{|\zeta|}
\end{equation}
with $\delta' = 1/2-\theta-\delta>0$.
\end{proof}

\section{Proof of Theorem~\ref{Thm:intro}}

\subsection{Fixed observables}
Fix a nonzero vector $\zeta \in \lat$ and recall the definition
\eqref{def Lambda_zeta},  \eqref{def Lambda_g} of the sets of
eigenvalues $\Lambda_\zeta $ and  $\Lambda_g$; both are of density
one in $\Lambda$ and thus
\begin{equation}
  \Lambda_{g,\zeta} := \Lambda_g \cap \Lambda_\zeta
\end{equation}
 is still a set of density one in $\Lambda$.

We will show that $\langle e_\zeta g_\lambda, g_\lambda \rangle \to
0$ as $\lambda \to \infty$ along $\Lambda_{g,\zeta}$. By
Lemma~\ref{approx} it suffices to show:
\begin{proposition}
Take $\lambda\in\Lambda_{g,\delta}$ and $L=\lambda^\delta$,
$\delta\in(\tfrac \theta 2,\tfrac{1}{2}-\theta)$. Fix nonzero
$\zeta\in\lat$. As $\lambda\to 0$ while   $\lambda
\in\Lambda_{g,\zeta}$,
\begin{equation}
\left\langle e_\zeta\, g_{\lambda,L},g_{\lambda,L}\right\rangle \to
0
\end{equation}
\end{proposition}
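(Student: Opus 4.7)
The strategy is direct Fourier expansion on the torus, followed by invocation of the Diophantine result of the previous section. First, write the matrix element as an integral against $|g_{\lambda,L}|^2$. Since $G_{\lambda,L}$ is real (pair $\xi$ with $-\xi$ in its Fourier series), we have
\begin{equation*}
\inprod{e_\zeta\, g_{\lambda,L}}{g_{\lambda,L}} = \frac{1}{\norm{G_{\lambda,L}}^{2}} \int_{\TT^{2}} e^{\i\langle x,\zeta\rangle}\, G_{\lambda,L}(x,x_{0})^{2}\, dx.
\end{equation*}
Expanding $G_{\lambda,L}^{2}$ as a double sum over $\xi,\xi' \in A(\lambda,L)$ and using orthogonality of the exponentials on $\TT^{2}$, only pairs with $\xi+\xi'+\zeta=0$ survive, which I would rewrite (using symmetry $\xi'\mapsto -\xi'$) as pairs $(\xi,\xi+\zeta)$ both lying in $A(\lambda,L)$. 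This produces
\begin{equation*}
\inprod{e_\zeta\, g_{\lambda,L}}{g_{\lambda,L}} = \frac{e^{\i\zeta\cdot x_{0}}}{4\pi^{2}\norm{G_{\lambda,L}}^{2}} \sum_{\substack{\xi\in A(\lambda,L)\\ \xi+\zeta\in A(\lambda,L)}} \frac{1}{(\abs{\xi}^{2}-\lambda)(\abs{\xi+\zeta}^{2}-\lambda)}.
\end{equation*}

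Next I would exploit the identity $\abs{\xi+\zeta}^{2}-\abs{\xi}^{2} = 2\langle \xi,\zeta\rangle + \abs{\zeta}^{2}$. If both $\xi$ and $\xi+\zeta$ lie in $A(\lambda,L)$, then subtracting the two annulus conditions forces
\begin{equation*}
\abs{\langle \xi,\zeta\rangle} \leq L + \tfrac{\abs{\zeta}^{2}}{2}.
\end{equation*}
For $\lambda$ sufficiently large (depending only on $\zeta$), the right-hand side is bounded by $2L = 2\lambda^{\delta}$, while $\abs{\xi}^{2\delta} \geq (\lambda-L)^{\delta}$ is comparable to $L$. Hence, up to an innocuous adjustment of the constant defining $S_{\zeta}$ (which does not affect Lemma~\ref{counting S_zeta} nor the density-one conclusion, since any such modification only changes the implied constant in the area bound $\area \mathcal R^+$), any such $\xi$ lies in $S_{\zeta}$.

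Now I would invoke the Diophantine proposition: for $\lambda \in \Lambda_{\zeta}$ we have $A(\lambda,\lambda^{\delta}) \cap S_{\zeta} = \emptyset$, so the sum displayed above is \emph{empty} for all sufficiently large $\lambda \in \Lambda_{\zeta}$. Since $\norm{G_{\lambda,L}}>0$ (the annulus is nonempty), the matrix element is identically zero for such $\lambda$, and a fortiori tends to zero along $\Lambda_{g,\zeta}\subset \Lambda_{\zeta}$.

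The argument has essentially no obstacle at this stage: all the hard work --- the Diophantine estimate ensuring $\Lambda_{\zeta}$ has density one, and the truncation Lemma~\ref{approx} allowing the reduction from $g_{\lambda}$ to $g_{\lambda,L}$ --- has already been done. The only point requiring a small amount of care is matching the constants in the definition of $S_{\zeta}$ with the bound $\abs{\langle \xi,\zeta\rangle}\leq L+\abs{\zeta}^{2}/2$ derived from the annulus; this is handled by noting that the constant in $S_{\zeta}$'s defining inequality is immaterial to the density-one conclusion. (To pass from fixed $\zeta$ to a general observable $a\in C^{\infty}(\TT^{2})$, one Fourier-expands $a$ and uses the rapid decay of $\widehat{a}(\zeta)$ to sum the individual contributions, with the constant Fourier mode yielding the leading $\tfrac{1}{\area(\TT^{2})}\int a$.)
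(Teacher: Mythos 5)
Your proof is correct in substance, but it takes a genuinely different route from the paper's. You exploit the exact Fourier orthogonality: since $G_{\lambda,L}$ only contains frequencies in $A(\lambda,L)$, the matrix element reduces to a sum over pairs $\xi,\,\xi+\zeta$ \emph{both} in the annulus, and the constraint $\bigl|2\langle\xi,\zeta\rangle+|\zeta|^2\bigr|<2L$ then clashes with the Diophantine condition defining $\Lambda_\zeta$, so (after enlarging the constant in $S_\zeta$) the sum is literally empty and the truncated matrix element vanishes identically for large $\lambda$. The paper instead keeps the sum $\sum_{\xi\in A(\lambda,L)}c(\xi)c(\xi-\zeta)$, applies Cauchy--Schwarz, and uses the Diophantine condition only to get the pointwise bound $|c(\xi-\zeta)|\ll 1/L$, which combined with the annulus count \eqref{counting annulus} and the lower bound of Lemma~\ref{lem:lower bd on norm} yields the quantitative decay $\ll\lambda^{\theta/2+o(1)}/L$. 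What your version buys: within this proposition you need neither the annulus count nor the norm lower bound (they are still needed upstream, in Lemma~\ref{approx}), and you get exact vanishing rather than a rate. What the paper's version buys: it works with $S_\zeta$ and $\Lambda_\zeta$ exactly as defined, and it is more robust. Indeed, the one point to watch in your argument is the constant: with the paper's definition $|\langle\eta,\zeta\rangle|\leq|\eta|^{2\delta}$, a pair $\xi,\xi+\zeta\in A(\lambda,L)$ is \emph{not} strictly excluded, since $|\xi|^{2\delta}$ can be as small as $(\lambda-L)^\delta=L(1-o(1))$ while your derived bound is $L+|\zeta|^2/2$; so under the stated definitions the pair sum need not be empty. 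Your fix (redefining $S_\zeta$ with, say, $2|\eta|^{2\delta}$, which leaves Lemma~\ref{counting S_zeta} and the density-one proposition untouched) is legitimate, though it means you are proving a trivially modified proposition rather than the one literally stated; alternatively, one can keep the paper's $S_\zeta$ and observe that any surviving pair still has $\bigl||\xi+\zeta|^2-\lambda\bigr|\gg L$, whence the corresponding terms contribute $O(\lambda^{o(1)+\theta/2}/L)$ exactly as in the paper's estimate. Either patch closes the issue, so I regard your argument as correct.
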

\begin{proof}
For $\xi\in\lat $ and $\lambda\in\Lambda_{g,\zeta}$ define
$$c(\xi)=\frac{1}{|\xi|^2-\lambda}.$$
The $L^2$-norm of the truncated Green's function $G_{\lambda,L}$
($L=\lambda^\delta$) is given by
\begin{equation}
\left\|G_{\lambda,L}\right\|_2^2=\frac{1}{16\pi^4}\sum_{\xi\in
A(\lambda,L)}c(\xi)^2
\end{equation}
and hence
\begin{equation}
\left\langle e_\zeta\,
G_{\lambda,L},G_{\lambda,L}\right\rangle=\frac{1}{16\pi^4}\sum_{\xi\in
  A(\lambda,L)}c(\xi)c(\xi-\zeta)\;.
\end{equation}
Cauchy-Schwarz gives
$$|\left\langle e_\zeta\,
G_{\lambda,L},G_{\lambda,L}\right\rangle|^2\leq\left\|G_{\lambda,L}\right\|_2^2\,
\sum_{\xi\in  A(\lambda,L)}c(\xi-\zeta)^2.$$ Note that for
$\lambda\in \Lambda_{\zeta,g}$,
$$|c(\xi-\zeta)|\ll \frac{1}{L}.$$
Indeed
\begin{equation}
  |\xi-\zeta|^2-\lambda   =
|\xi|^2-\lambda -2\langle \xi,\zeta
   \rangle +|\zeta|^2
\end{equation}
and since for $\lambda \in \Lambda_\zeta$ and $\xi \in A(\lambda,L)$
we have $|\langle \xi,\zeta
   \rangle|>|\xi|^{2\delta} \sim L$ we find that
   \begin{equation}
\left| |\xi-\zeta|^2-\lambda \right| \geq  2L(1+o(1))-L-|\zeta|^2
\gg L
   \end{equation}


Using \eqref{counting annulus}  gives the bound
\begin{equation}
\sum_{\xi\in A(\lambda,L)}c(\xi-\zeta)^2\ll \frac{\#
A(\lambda,L)}{L^2}\ll \frac {\lambda^{\theta}}{L^2}
\end{equation}
(recall $\theta\geq 1/4$) so that we find
\begin{equation}
  \left\langle e_\zeta\,G_{\lambda,L},G_{\lambda,L}\right\rangle \ll
||G_\lambda|| \frac{\lambda^{\theta/2}}{L}
\end{equation}

The lower bound  $||G_\lambda||\gg 1/\lambda^{o(1)}$ of
Lemma~\ref{lem:lower bd on norm}  implies
$$
\left\langle e_\zeta\, g_{\lambda,L},g_{\lambda,L}\right\rangle
=\frac{\left\langle e_\zeta\,
  G_{\lambda,L},G_{\lambda,L}\right\rangle}{\left\|G_{\lambda,L}\right\|_2^2}
\ll
\frac{\lambda^{\theta/2+o(1)}}{L}=\frac{\lambda^{\theta/2+o(1)}}{\lambda^\delta}
$$
for $\lambda\in\Lambda_{g,\zeta}$, which tends to zero since
$\delta>\theta/2$.
\end{proof}

\subsection{A diagonalization argument} We have shown that for each
$0\neq \zeta\in \lat$, there is a density one subset
$\Lambda_{g,\zeta}$ of eigenvalues so that $\langle e_\zeta
g_\lambda,g_\lambda \rangle \to 0$ as $\lambda\to \infty$ along
$\Lambda_{g,\zeta}$. It remains to see that there is a density one
subset $\Lambda_\infty \subset \Lambda$ so that for every observable
$a\in C^\infty(\TT^2)$, we have
\begin{equation}\label{convergence for all obs}
  \langle a g_\lambda,g_\lambda \rangle \to \frac 1{\mbox{area}(\TT^2)}
\int_{\TT^2} a(x)dx
\end{equation}
as $\lambda\to \infty$ along $\Lambda_\infty$. We recall the
argument, which can be found e.g. in \cite{CdV2}. For $J\geq 1$, let
$\Lambda_J\subset \Lambda$ be of density one so that for all
$|\zeta|\leq J$, $\langle e_\zeta g_\lambda,g_\lambda \rangle \to 0$
as $\lambda\to \infty$ along $\Lambda_{g,\zeta}$, and in particular
for every trigonometric polynomial $P_J(x) = \sum_{|\zeta|\leq J}
p_\zeta e_\zeta(x)$ we have
\begin{equation}\label{convergence for pols}
   \langle P_J g_\lambda,g_\lambda \rangle \to \frac
   1{\mbox{area}(\TT^2)}\int_{\TT^2} P_J(x)dx
\end{equation}
We may assume that $\Lambda_{J+1}\subseteq \Lambda_J$ for each $J$.
Now choose $M_J$ so that for all $X>M_J$,
\begin{equation}
  \frac 1{\#\{\lambda \in \Lambda: \lambda \leq X \}} \#\{\lambda\in
  \Lambda_J: \lambda \leq X \} \geq 1-\frac 1{2^J}
\end{equation}
and let $\Lambda_\infty$  be such that $\Lambda_\infty \cap
[M_J,M_{J+1}] = \Lambda_J \cap [M_J,M_{J+1}]$ for all $J$. Then
$\Lambda_\infty\cap [0,M_{J+1}]$ contains $\Lambda_J \cap
[0,M_{J+1}]$ and therefore $\Lambda_\infty$ has density one in
$\Lambda$ and \eqref{convergence for pols} holds for $\lambda\in
\Lambda_\infty$. Since the trigonometric polynomials are dense in
$C^\infty(\TT^2)$ in the uniform norm and the probability measures
$|g_\lambda(x)|^2dx$ are continuous with respect to this norm, we
find that \eqref{convergence for all obs} holds.

\appendix

\section{A rigorous description of the point scatterer}
\label{sec:rigorous}

Denote by $\delta_{x_0}$ the Dirac distribution at $x_0\in\T^2$. We
are interested in solutions to the equation
\begin{equation}\label{specproblem}
(\Delta+\lambda)f=\delta_{x_0},\qquad f\in
C^\infty(\T^2\setminus\lbrace x_0\rbrace),\; \left\|f\right\|_2=1
\end{equation}
and its association with the eigenfunctions and eigenvalues of a
family of self-adjoint operators. Consider the domain of
$C^\infty$-functions which vanish in a neighborhood of $x_0$
$$D_0=C^\infty_0(\T^2\setminus\lbrace x_0\rbrace)$$ and denote
$-\Delta_{x_0}=-\Delta|_{D_0}$. The operators associated with
equation \eqref{specproblem} form the family of self-adjoint
extensions of the positive symmetric operator $-\Delta_{x_0}$ (cf.
\cite{CdV}, section 1, p. 277).

We make the following conventions in the definition of the Green's
function.
\begin{definition}
Denote by $\sigma(-\Delta)$ the spectrum of $-\Delta$ on
$C^2(\T^2)$. We define the Green's function $G_\lambda(x;y)$ at
energy $\lambda\in\C\setminus\sigma(-\Delta)$ on $\T^2$ as the
integral kernel of the resolvent $(\Delta+\lambda)^{-1}$, that is
\begin{equation}
(\Delta+\lambda)^{-1}f(y)=\int_{\T^2}G_\lambda(x,y)f(x)dx.
\end{equation}
\end{definition}

In order to give a self-contained presentation of the theory of
self-adjoint extensions, we briefly recall the standard definitions
of the adjoint of an operator, symmetry and self-adjointness.
\begin{definition}
Let $H$ be a Hilbert space and $\Dom(B)\subset H$. Consider the
operator $B: \Dom(B)\to H$. We define
$$\Dom(B^*)=\lbrace y\in H \mid \exists a\in H:\;\forall x\in\Dom(B):\;\left\langle  Bx,y\right\rangle=\left\langle  x,a\right\rangle\rbrace$$ then we define the adjoint $B^*$ of $B$ as the map $B^*:\Dom(B^*)\to H$,
$$B^*y=a.$$ We call $B$ symmetric if $$\forall x,y\in\Dom(B): \left\langle  Bx,y\right\rangle=\left\langle  x,By\right\rangle.$$ We call $B$ self-adjoint if $B$  is symmetric and $$\Dom(B)=\Dom(B^*).$$
\end{definition}

We have the following well-known results from self-adjoint extension
theory which we summarize briefly. Proofs can be found in \cite{RS},
Chapter X.1. We will give the relevant references for each lemma.
\begin{definition}
Let $B$ be a densely defined symmetric operator on a Hilbert space
$H$. Denote its adjoint by $B^*$. Let $\eta\in\C\setminus\R$. The
deficiency spaces of $B$ at $\eta$ and $\bar{\eta}$ are defined as
\begin{equation}
D_\eta(B)=\ker\lbrace B^*-\eta\rbrace, \qquad
D_{\bar{\eta}}(B)=\ker\lbrace B^*-\bar{\eta}\rbrace.
\end{equation}
We refer to the members of a basis of a deficiency space as
deficiency elements.
\end{definition}

The following lemma is proven as part of Theorem X.1, p. 136 in
\cite{RS}.
\begin{lemma}
As a function of $\eta$, $\dim D_\eta(B)$ is constant  on the upper
(lower) complex half-plane.
\end{lemma}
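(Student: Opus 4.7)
The plan is to show that $\eta \mapsto \dim D_\eta(B)$ is locally constant on each open half-plane and then invoke connectedness. The starting point is the bound: for $\eta \in \C\setminus\R$ and $x \in \dom(B)$, the symmetry of $B$ gives
\[
\|(B-\eta)x\|^2 = \|(B-\Re\eta)x\|^2 + (\Im\eta)^2\|x\|^2 \geq (\Im\eta)^2\|x\|^2,
\]
so $B-\eta$ is injective on $\dom(B)$ and (after replacing $B$ by its closure, which preserves $B^*$ and hence the deficiency spaces) has closed range. From the identity $\langle (B-\bar\eta)x,y\rangle = \langle x,(B^*-\eta)y\rangle$ one deduces
\[
D_\eta(B) = \ker(B^*-\eta) = \operatorname{Ran}(B-\bar\eta)^\perp,
\]
so that $\dim D_\eta(B)$ equals the codimension in $H$ of $\operatorname{Ran}(B-\bar\eta)$.

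Next I would equip $\dom(B)$ with the graph norm so that it becomes a Hilbert space $H_B$, and view $T_\eta := B - \eta : H_B \to H$ as a bounded operator that is affine, and in particular norm-continuous, in $\eta$. By the previous step, for $\eta \notin \R$ each $T_\eta$ is semi-Fredholm with $\ker T_\eta = 0$ and Fredholm index $\operatorname{ind}(T_\eta) = -\dim D_{\bar\eta}(B)$. The standard stability theorem for semi-Fredholm operators asserts that the semi-Fredholm property is open in the operator-norm topology and that the index is locally constant on it; hence $\dim D_{\bar\eta}(B)$ is locally constant in $\eta$ on each open half-plane. Since each open half-plane is connected and the dimension takes integer values, it is globally constant there, which gives the claim after swapping $\eta \leftrightarrow \bar\eta$.

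The main potential obstacle is the invocation of semi-Fredholm stability, which is standard but sits outside the self-contained setup of the paper. A more hands-on alternative would be to establish local constancy directly: for $\eta$ in a small disk around $\eta_0$, show that the orthogonal projection $P_{\eta_0}: D_\eta(B) \to D_{\eta_0}(B)$ is a bijection. Injectivity can be extracted from the coercive estimate above, since any nonzero vector in the kernel of $P_{\eta_0}$ must lie in $\operatorname{Ran}(B-\bar{\eta_0})$, and using the bound $\|(B-\bar{\eta_0})^{-1}\|_{\operatorname{Ran}} \leq 1/|\Im\eta_0|$ together with a Neumann-series argument forces it to vanish for $|\eta-\eta_0|$ small enough; surjectivity follows by a dual construction producing for each $\psi \in D_{\eta_0}(B)$ a nearby $\phi \in D_\eta(B)$ with $P_{\eta_0}\phi = \psi$. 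Either route delivers the local constancy needed for the connectedness argument.
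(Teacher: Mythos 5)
Your argument is correct, but it is worth knowing that the paper does not prove this lemma at all: it simply quotes it from Reed--Simon \cite{RS}, Theorem X.1, so any comparison is really with the classical von Neumann argument given there. That classical proof runs along the lines of your ``hands-on alternative'': it uses the same coercive estimate $\|(B-\eta)x\|\geq|\Im\eta|\,\|x\|$ together with the elementary geometric fact that if $M,N$ are closed subspaces with $\dim M>\dim N$ then $M\cap N^{\perp}\neq\{0\}$, to show that $\dim D_{\eta}$ cannot change when $|\eta-\eta_0|<|\Im\eta_0|$; it needs no Fredholm theory. Your main route is a genuinely different packaging of the same estimate: after passing to the closure (harmless, since $\overline{B}{}^{*}=B^{*}$), the bound gives injectivity and closed range of $B-\bar\eta$ on $\dom(B)$ with the graph norm, the identity $D_{\eta}(B)=\operatorname{Ran}(B-\bar\eta)^{\perp}$ converts the deficiency dimension into a codimension of a closed range, and semi-Fredholm stability of the index (Kato, Thm.~IV.5.17) gives local constancy; since the nullity is identically zero on the whole half-plane, constancy of the index does indeed force constancy of the deficiency, including the case of index $-\infty$, which the stability theorem covers. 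What the two approaches buy: yours is shorter once one grants the stability theorem, but imports machinery well outside the paper's self-contained appendix; the Reed--Simon/projection argument is elementary and is the one the authors implicitly rely on. Either is acceptable; if you want a proof in the spirit of the paper, flesh out your second sketch (the injectivity half via the Neumann series is the only step needing care, and surjectivity follows by symmetry between $\eta$ and $\eta_0$ rather than by a separate ``dual construction'').
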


We proceed with the definition of the deficiency indices of a closed
symmetric operator which indicate if the operator can be extended to
a self-adjoint operator.
\begin{definition}
If $\dim D_\eta(B)=m$ and $\dim D_{\bar{\eta}}(B)=n$ for nonnegative
integers $m$, $n$, we say that the operator $B$ has deficiency
indices $(m,n)$.
\end{definition}

The following lemma is Corollary (a), p. 141 in \cite{RS}.
\begin{lemma}
$B$ has deficiency indices $(0,0)$ if, and only if, $B$ is
self-adjoint.
\end{lemma}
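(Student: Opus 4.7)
The plan is to combine two standard ingredients: the identity
\begin{equation*}
\operatorname{Ran}(B-\eta)^\perp = \ker(B^*-\bar\eta) = D_{\bar\eta}(B),
\end{equation*}
which is immediate from the definition of $B^*$, and the well-known fact that a closed, densely defined symmetric operator $B$ is self-adjoint if and only if $\operatorname{Ran}(B-i) = \operatorname{Ran}(B+i) = H$. The deficiency-index hypothesis translates exactly into the required range condition via the identity above.

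For the forward implication, I would appeal to the spectral theorem: the spectrum of a self-adjoint operator is real, so for any $\eta \in \C \setminus \R$ the map $B-\eta:\Dom(B)\to H$ is a bijection. Since $B^*=B$, this gives $D_\eta(B) = \ker(B^*-\eta) = 0$, and similarly $D_{\bar\eta}(B)=0$, so the deficiency indices are $(0,0)$.

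For the converse, the main technical ingredient is the symmetry estimate: for every $x\in\Dom(B)$ and non-real $\eta$,
\begin{equation*}
\|(B-\eta)x\|^2 = \|(B-\Re\eta)x\|^2 + (\Im\eta)^2\|x\|^2 \geq (\Im\eta)^2\|x\|^2.
\end{equation*}
This shows that $B-\eta$ is injective, and since $B$ is closed, standard Cauchy-sequence chasing lifts the estimate to show that $\operatorname{Ran}(B-\eta)$ is closed. Assuming $D_i(B)=D_{-i}(B)=0$, the orthogonality identity forces $\operatorname{Ran}(B\pm i)$ to be dense, hence equal to $H$. To conclude $\Dom(B^*)\subseteq\Dom(B)$, I would take $y\in\Dom(B^*)$ and use surjectivity of $B-i$ to find $x\in\Dom(B)$ with $(B-i)x = (B^*-i)y$; since $B\subseteq B^*$, this gives $(B^*-i)(x-y)=0$, and $\ker(B^*-i)=D_i(B)=0$ then yields $y=x\in\Dom(B)$.

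The main obstacle is the closed-range step, which crucially uses that $B$ is closed---a hypothesis implicit in the setting of deficiency indices; for a merely symmetric $B$ one would have to pass to the closure $\overline{B}$, which has the same deficiency indices. A subtler point is that \emph{both} indices must vanish: a nonzero $D_i(B)$ alone would leave a proper symmetric extension strictly between $B$ and $B^*$, so the symmetric treatment at $\pm i$ is essential. Beyond these two points the argument is just careful bookkeeping with the definition of the adjoint.
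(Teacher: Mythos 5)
The paper does not actually prove this lemma: it is quoted from Reed--Simon (Corollary (a), p.~141), and your argument is precisely the standard proof found there --- von Neumann's identity $\operatorname{Ran}(B-\eta)^{\perp}=\ker(B^{*}-\bar\eta)$ combined with the basic criterion that a closed, densely defined symmetric operator is self-adjoint if and only if $\operatorname{Ran}(B\pm i)=H$ --- so your proposal is correct and matches the cited source. Two minor remarks. For the forward direction the spectral theorem is heavier than needed (and has a circular flavour, since most constructions of the spectral theorem rest on exactly the bijectivity of $B\pm i$): the symmetry estimate you state later, applied with $B=B^{*}$, already gives $\ker(B^{*}\mp i)=\ker(B\mp i)=0$. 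Also, your side remark that a nonzero $D_{i}(B)$ ``alone would leave a proper symmetric extension strictly between $B$ and $B^{*}$'' is inaccurate when $D_{-i}(B)=0$: such an operator is maximal symmetric and admits no proper symmetric extension, yet it still fails to be self-adjoint simply because $\ker(B^{*}-i)\neq 0$ while $\ker(B-i)=0$. This does not affect your proof, which genuinely uses both indices (one to get surjectivity of $B-i$, the other to conclude $x=y$), and your observation that closedness of $B$ is required for the converse is correct and consistent with the paper's convention that deficiency indices are defined for closed symmetric operators.
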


If the deficiency indices are nonzero and equal, then a family of
self-adjoint extensions exists and can be constructed as follows.
This lemma combines Theorem X.2, p. 140 and Corollary (b), p. 141 in
\cite{RS}.
\begin{lemma}\label{sae}
If a closed positive symmetric operator $B$ has deficiency indices
$(n,n)$, $n\geq1$, then for each unitary map $U: D_\i(B)\to
D_{-\i}(B)$ there is a self-adjoint extension $B_U: D_U\to H$, where
\begin{equation}
D_U=\lbrace f=g+h+Uh \mid (g,h)\in \Dom(B) \times D_{\i}(B)\rbrace
\end{equation}
and $B_U$ acts as follows
\begin{equation}
B_U f=Bg+\i h-\i Uh.
\end{equation}
The operator $B_U$ has deficiency indices $(0,0)$. Conversely, every
self-adjoint extension of $B$ is of this form.
\end{lemma}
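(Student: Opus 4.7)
The plan is to invoke von Neumann's direct sum decomposition of the domain of the adjoint,
\begin{equation*}
\Dom(B^*) = \Dom(B) \oplus D_\i(B) \oplus D_{-\i}(B),
\end{equation*}
orthogonal with respect to the graph inner product $\langle f,g\rangle_{B} := \langle f,g\rangle + \langle B^*f, B^*g\rangle$. This classical decomposition, which relies on $B$ being closed so that the ranges of $B^*\mp\i$ are closed with orthogonal complements $D_{\pm\i}(B)$, reduces both the construction and the classification of self-adjoint extensions to choosing appropriate subspaces of $D_\i(B)\oplus D_{-\i}(B)$.

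For the forward direction, given a unitary $U$, I would first note that the representation $f = g + h + Uh$ of an element of $D_U$ is unique, by the direct sum property. To verify that $B_U$ is symmetric, I would take two elements $f_j = g_j + h_j + Uh_j$ and expand
\begin{equation*}
\langle B_U f_1, f_2\rangle - \langle f_1, B_U f_2\rangle
\end{equation*}
using $B^* h_j = \i h_j$ and $B^* Uh_j = -\i Uh_j$. The terms involving only $g_j$ cancel by symmetry of $B$, the cross terms between $\Dom(B)$ and the deficiency spaces vanish by the orthogonality of the decomposition, and the purely deficiency contributions collapse precisely because $U$ is an isometry: $\langle Uh_1, Uh_2\rangle = \langle h_1, h_2\rangle$. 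To upgrade symmetry to self-adjointness I would establish the nontrivial inclusion $\Dom(B_U^*) \subseteq D_U$. Given $f\in\Dom(B_U^*)\subseteq\Dom(B^*)$, decompose $f = g + h_+ + h_-$ and test the defining relation $\langle B_U u, f\rangle = \langle u, B_U^* f\rangle$ against $u = k + Uk$ for arbitrary $k \in D_\i(B)$; the cancellations force $h_- = U h_+$, so $f \in D_U$. Since $B_U$ is then self-adjoint, $B_U \pm \i$ are bijections $D_U \to H$, whence $D_{\pm\i}(B_U) = 0$.

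For the converse, any self-adjoint extension $\tilde B$ satisfies $B\subseteq\tilde B \subseteq B^*$. Decomposing $f\in\Dom(\tilde B)$ as $g + h_+ + h_-$ with $h_\pm \in D_{\pm\i}(B)$, the identity $\langle \tilde B f, f\rangle \in \R$ combined with $B^*h_\pm = \pm\i h_\pm$ yields $\|h_+\| = \|h_-\|$, so the assignment $h_+ \mapsto h_-$ extends by linearity to a well-defined partial isometry between subspaces of $D_\i(B)$ and $D_{-\i}(B)$. Self-adjointness of $\tilde B$ (as opposed to mere symmetry) rules out any proper symmetric extension; this maximality forces the domain and range of the partial isometry to fill out all of $D_\i(B)$ and $D_{-\i}(B)$ respectively, so it is a unitary $U$, and $\tilde B = B_U$ by construction.

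The main obstacle is establishing the von Neumann decomposition rigorously (which requires the closedness hypothesis and the argument that $B^*\mp\i$ have closed ranges whose orthogonal complements are exactly the deficiency spaces) and then executing the identification $h_- = U h_+$ in the self-adjointness step: one must pick the test vectors $u \in D_U$ cleverly enough to separate the contributions of $h_+$ and $h_-$ after the symmetry cancellations. Once these are in hand, the remainder of the proof is bookkeeping driven by orthogonality of the decomposition and by the defining property $\|Uh\|=\|h\|$ of an isometry.
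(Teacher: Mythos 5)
The paper gives no proof of this lemma at all: it is imported verbatim from Reed--Simon (Theorem X.2 and its Corollary (b) in \cite{RS}), where the classification of self-adjoint extensions is obtained via the Cayley transform $(B-\i)(B+\i)^{-1}$ and its unitary extensions. Your proposal runs the other classical route: von Neumann's decomposition $\Dom(B^*)=\Dom(B)\oplus D_\i(B)\oplus D_{-\i}(B)$ (orthogonal for the graph inner product of $B^*$) together with the boundary form $[f_1,f_2]=\langle B^*f_1,f_2\rangle-\langle f_1,B^*f_2\rangle$. In outline it is correct: symmetry of $B_U$ reduces to $\langle Uh_1,Uh_2\rangle=\langle h_1,h_2\rangle$; testing $f=g+h_++h_-\in\Dom(B_U^*)$ against $u=k+Uk$ yields $\langle k,h_+\rangle=\langle Uk,h_-\rangle=\langle k,U^{-1}h_-\rangle$ for all $k\in D_\i(B)$, hence $h_-=Uh_+$; and in the converse the vanishing of $[f,f]$ gives $\|h_+\|=\|h_-\|$, which (applied also to differences of domain elements) makes $h_+\mapsto h_-$ a well-defined isometry. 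What the paper's citation buys is brevity; what your route buys is that the explicit domain formula for $D_U$ and the action $B_Uf=Bg+\i h-\i Uh$ fall out directly, without translating back from the Cayley picture.

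Three points deserve tightening, though none is a fatal gap. First, the closed-range statement underlying the decomposition concerns $B\pm\i$ on $\Dom(B)$, not $B^*\mp\i$ as you wrote: closedness of $B$ and $\|(B\pm\i)u\|^2=\|Bu\|^2+\|u\|^2$ give that $\mathrm{Ran}(B\pm\i)$ is closed with $\ker(B^*\mp\i)=\mathrm{Ran}(B\pm\i)^\perp$. Second, the cross terms between $\Dom(B)$ and the deficiency spaces vanish because of the adjoint relation $\langle Bg,h\rangle=\langle g,B^*h\rangle$, not because of orthogonality in $H$; the von Neumann decomposition is orthogonal only for the graph inner product. Third, in the converse, ``maximality forces the partial isometry to be unitary'' is valid here only because the deficiency indices are finite (as in the paper's definition), so the initial and final spaces of the isometry have equal dimension and both are proper or neither is; with infinite indices a maximal symmetric, non-self-adjoint extension could have exactly one of them proper. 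A cleaner and fully general argument uses self-adjointness directly: $\tilde B\pm\i$ are surjective, while $\mathrm{Ran}(\tilde B+\i)=\mathrm{Ran}(B+\i)\oplus M$ and $\mathrm{Ran}(\tilde B-\i)=\mathrm{Ran}(B-\i)\oplus N$, where $M$, $N$ are the initial and final spaces; surjectivity forces $M=D_\i(B)$ and $N=D_{-\i}(B)$.
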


We apply Lemma \ref{sae} to construct a one parameter family of
self-adjoint extensions of the operator $-\Delta_{x_0}$. Denote the
domain of the closure of $-\Delta_{x_0}$ by $\tilde{D_0}$.
\begin{lemma}
The operator $-\Delta_{x_0}$ has deficiency indices $(1,1)$. The
corresponding deficiency elements are the Green's functions
$G_{\i}(x,x_0)$, $G_{-\i}(x,x_0)$.

The self-adjoint extensions of $-\Delta_{x_0}$ are given by the one
parameter family
\begin{equation}
-\Delta_{\varphi}:D_\varphi\to L^2(\T^2),\qquad \varphi\in(-\pi,\pi]
\end{equation}
where
\begin{equation}
D_\varphi=\lbrace g+cG_\i+ce^{\i\varphi}G_{-\i}:   g\in \tilde{D_0},
c\in \C\rbrace
\end{equation}
and
\begin{equation}
-\Delta_\varphi f=-\Delta g+c\i G_\i-c e^{\i\varphi} \i G_{-\i}.
\end{equation}
\end{lemma}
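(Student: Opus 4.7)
The plan is to compute the deficiency spaces of $-\Delta_{x_0}$ explicitly, identify them with the Green's functions at $\pm\i$, and then invoke Lemma~\ref{sae} to parametrize the self-adjoint extensions by a circle of phases.

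\textbf{Step 1: Identify the deficiency spaces.} By definition $f\in D_{\pm\i}(-\Delta_{x_0})=\ker(-\Delta_{x_0}^*\mp \i)$ iff $f\in L^2(\T^2)$ and $\langle f,-\Delta u\rangle=\pm\i\langle f,u\rangle$ for all $u\in C_c^\infty(\T^2\setminus\{x_0\})$. This is precisely the statement that $(-\Delta\mp\i)f=0$ as a distribution on $\T^2\setminus\{x_0\}$. Viewed as a distribution on the full torus, $(-\Delta\mp\i)f$ is therefore supported at $\{x_0\}$, so it is a finite linear combination of $\delta_{x_0}$ and its derivatives.

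\textbf{Step 2: Rule out derivative terms.} Since $G_{\pm\i}(\cdot,x_0)$ behaves like $\tfrac1{2\pi}\log|x-x_0|$ near $x_0$ (the $D=2$ case of the fundamental solution recalled in the body of the paper), it is just barely in $L^2_{\mathrm{loc}}$. Solving $(-\Delta\mp\i)f=\delta_{x_0}^{(\alpha)}$ for a non-trivial derivative of the Dirac mass would produce a local singularity strictly worse than logarithmic, in particular of type $|x-x_0|^{-k}$ with $k\geq1$, which is incompatible with $f\in L^2(\T^2)$. Hence $(-\Delta\mp\i)f=c_{\pm}\,\delta_{x_0}$ for some $c_{\pm}\in\C$, which gives $f=-c_{\pm}G_{\pm\i}(\cdot,x_0)$ from the definition of the Green's function. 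Conversely $G_{\pm\i}\in\Dom(-\Delta_{x_0}^*)$ with $-\Delta_{x_0}^*G_{\pm\i}=\pm\i G_{\pm\i}$ is an immediate integration-by-parts check against test functions supported away from $x_0$. Thus
\begin{equation}
D_{\pm\i}(-\Delta_{x_0})=\C\cdot G_{\pm\i}(\cdot,x_0).
\end{equation}

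\textbf{Step 3: $L^2$-check and equality of norms.} The $L^2$-expansion of $G_{\pm\i}$ given in the body yields
\begin{equation}
\|G_{\pm\i}\|_2^2=\frac{1}{16\pi^4}\sum_{\xi\in\lat}\frac{1}{|\xi|^4+1},
\end{equation}
which is finite because $\sum_{\xi\in\lat}|\xi|^{-4}<\infty$ in dimension two. The right-hand side is invariant under $\i\mapsto-\i$, so $\|G_{\i}\|_2=\|G_{-\i}\|_2$. Combined with Step~2 this shows that both deficiency spaces are one-dimensional, so $-\Delta_{x_0}$ has deficiency indices $(1,1)$.

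\textbf{Step 4: Parametrize the extensions.} Because $\dim D_{\pm\i}=1$ and the chosen generators $G_{\pm\i}$ have equal norm, every unitary $U\colon D_{\i}\to D_{-\i}$ is of the form $U(cG_{\i})=c\,e^{\i\varphi}G_{-\i}$ for a unique $\varphi\in(-\pi,\pi]$. Substituting $h=cG_{\i}$ and $Uh=ce^{\i\varphi}G_{-\i}$ into the general formulas of Lemma~\ref{sae} yields
\begin{equation}
D_\varphi=\{g+cG_{\i}+ce^{\i\varphi}G_{-\i}:g\in\tilde{D_0},\ c\in\C\},\qquad -\Delta_\varphi f=-\Delta g+\i cG_{\i}-\i c e^{\i\varphi}G_{-\i},
\end{equation}
which is the stated description. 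The main technical point—and the only step that is not pure bookkeeping from Lemma~\ref{sae}—is Step~2: controlling the order of the distribution $(-\Delta\mp\i)f$ by the $L^2$-regularity of $f$. Everything else is either a direct distributional computation or an application of the abstract extension theorem.
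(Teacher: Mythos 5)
Your proof is correct and follows essentially the same route as the paper: identify the deficiency spaces $\ker\lbrace(-\Delta_{x_0})^*\mp\i\rbrace$ with the spans of the Green's functions $G_{\pm\i}(\cdot,x_0)$, conclude the indices are $(1,1)$, and then apply Lemma~\ref{sae}. The only difference is one of detail: the paper asserts the kernel identification ``by definition of the Green's function,'' while you fill in the supporting steps (a distribution supported at a point is a combination of $\delta_{x_0}$ and its derivatives, the derivative terms are excluded by $L^2$-regularity, and $\left\|G_{\i}\right\|_2=\left\|G_{-\i}\right\|_2$ so the unitaries are exactly the phases $e^{\i\varphi}$), all of which are sound.
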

\begin{proof}
By definition of the Green's function we have that
\begin{equation}
\ker\lbrace(\Delta_{x_0})^*\pm\i\rbrace=\scrL\lbrace
G_{\pm\i}\rbrace.
\end{equation}
Hence $-\Delta_{x_0}$ has deficiency indices $(1,1)$ and we may
apply Lemma \ref{sae} to obtain the result.
\end{proof}
\begin{remark}
Note that $-\Delta_\pi$ recovers the Laplacian on $C^\infty(\T^2)$.
\end{remark}

Next we derive an equation for eigenvalues of the operator
$-\Delta_\varphi$, $\varphi\in(-\pi,\pi)$.
\begin{lemma}
Let $\varphi\in(-\pi,\pi)$. We have that $\lambda\notin
\sigma(-\Delta)$ is an eigenvalue of $-\Delta_{\varphi}$ if, and
only if,
\begin{equation}\label{speceq}
\sum_{\xi\in\lat}\left( \frac 1{|\xi|^2-\lambda} -
\frac{|\xi|^2}{|\xi|^4+1} \right)=c_0\tan(\varphi/2),
\end{equation}
where $$c_0=\sum_{\xi\in\lat}\frac{1}{|\xi|^4+1}.$$ The
corresponding eigenfunction is a multiple of $G_\lambda(x;x_0)$.
\end{lemma}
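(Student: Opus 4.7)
The plan is to start from an eigenfunction $f$ of $-\Delta_\varphi$ at eigenvalue $\lambda\notin\sigma(-\Delta)$, decompose it using the description of $D_\varphi$ in the previous lemma, pin $f$ down as a scalar multiple of $G_\lambda(\cdot;x_0)$ using the eigenvalue equation, and then read off the spectral condition from the requirement that the smooth remainder actually lie in $\tilde{D_0}$. Explicitly, I would write $f = g + c G_\i(\cdot;x_0) + c e^{\i\varphi} G_{-\i}(\cdot;x_0)$ with $g\in\tilde{D_0}$ and $c\in\C$, and substitute into $-\Delta_\varphi f = \lambda f$ using the formula for $-\Delta_\varphi$ from the previous lemma. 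This reduces to
\[ -(\Delta+\lambda)g = c(\lambda-\i)G_\i(\cdot;x_0) + c e^{\i\varphi}(\lambda+\i)G_{-\i}(\cdot;x_0). \]
Inverting with $(\Delta+\lambda)^{-1}$ and using the resolvent-identity consequence $(\Delta+\lambda)^{-1}G_\mu(\cdot;x_0) = (G_\lambda - G_\mu)/(\mu-\lambda)$, the $(\lambda\pm\i)$-prefactors cancel telescopically and I obtain $g = c(1+e^{\i\varphi})G_\lambda - cG_\i - c e^{\i\varphi}G_{-\i}$, hence $f = c(1+e^{\i\varphi})G_\lambda(\cdot;x_0)$. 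Since $1+e^{\i\varphi}\neq 0$ for $\varphi\in(-\pi,\pi)$, every such eigenfunction must indeed be a nonzero scalar multiple of $G_\lambda$.

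The condition $g\in\tilde{D_0}$ then supplies the spectral equation. Recall that $\tilde{D_0}$ consists of those elements of $\Dom(-\Delta_{x_0}^*)$ for which both the logarithmic coefficient and the constant term in the local expansion at $x_0$ vanish. Using $G_\mu(x;x_0) = \frac{1}{2\pi}\log|x-x_0| + B(\mu) + o(1)$ as $x\to x_0$, the log part of $g$ cancels automatically because $c(1+e^{\i\varphi}) - c - c e^{\i\varphi} = 0$, while vanishing of the constant gives the scalar condition $(1+e^{\i\varphi})B(\lambda) = B(\i) + e^{\i\varphi}B(-\i)$. To identify this with the claimed equation, I would split the formal Fourier series for $G_\mu$ by adding and subtracting the $\mu$-independent weight $|\xi|^2/(|\xi|^4+1)$, producing an absolutely convergent remainder $S(\mu):=\sum_\xi\{(|\xi|^2-\mu)^{-1} - |\xi|^2/(|\xi|^4+1)\}$; comparing with the local expansion then gives $B(\mu) = -S(\mu)/(4\pi^2) + C$ for a $\mu$-independent constant $C$ coming from the universal singular part of the subtracted series. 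The $C$ cancels from the constraint, leaving $(1+e^{\i\varphi})S(\lambda) = S(\i)+e^{\i\varphi}S(-\i)$. A direct computation $(|\xi|^2-\i)^{-1} - |\xi|^2/(|\xi|^4+1) = \i/(|\xi|^4+1)$ (the paper's subtraction term is chosen precisely to kill the real part!) gives $S(\i) = \i c_0$ and hence $S(-\i) = \overline{S(\i)} = -\i c_0$; dividing through by $e^{\i\varphi/2}$ reduces the equation to $2\cos(\varphi/2)S(\lambda) = 2 c_0\sin(\varphi/2)$, i.e.\ $S(\lambda) = c_0\tan(\varphi/2)$, as claimed.

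For the converse, one runs the same chain backwards: given $\lambda\notin\sigma(-\Delta)$ with $S(\lambda) = c_0\tan(\varphi/2)$, set $c := (1+e^{\i\varphi})^{-1}$ and $f := G_\lambda(\cdot;x_0)$; the spectral equation forces $g := f - cG_\i - ce^{\i\varphi}G_{-\i}$ to have vanishing logarithmic coefficient and vanishing constant term at $x_0$, so $g\in\tilde{D_0}$, whence $f\in D_\varphi$, and the same resolvent computation yields $-\Delta_\varphi f = \lambda f$. The step I expect to be the main obstacle is the rigorous identification $B(\mu) = -S(\mu)/(4\pi^2) + C$, which requires showing that the singular part of the $\mu$-independent series $\sum_\xi |\xi|^2 e^{\i\xi\cdot y}/(|\xi|^4+1)$ near $y=0$ agrees with $-2\pi\log|y|$ up to a bounded function; this is a standard Fourier-series/resolvent-kernel computation (one can compare with $\sum_{\xi\neq 0} e^{\i\xi\cdot y}/|\xi|^2$, whose log singularity is fixed by applying $\Delta$ term-by-term against the Poisson formula $\sum_\xi e^{\i\xi\cdot y} = 4\pi^2\delta_0$), but it is the place where the divergent formal sum at the outset actually has to be handled with care.
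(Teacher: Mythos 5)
Your proof is correct and, in its skeleton, the same as the paper's: you decompose $f\in D_\varphi$ as $g+cG_\i+ce^{\i\varphi}G_{-\i}$, apply $(\Delta+\lambda)^{-1}$ together with the iterated resolvent identity to force $f=c(1+e^{\i\varphi})G_\lambda$, and read the spectral equation off the behaviour at $x_0$; the converse reverses the same computation, as in the appendix. Where you genuinely differ is the last step. You expand each Green's function separately, $G_\mu=\frac{1}{2\pi}\log|x-x_0|+B(\mu)+o(1)$, and then need the identification $B(\mu)=-S(\mu)/(4\pi^2)+C$ with $C$ independent of $\mu$, i.e.\ you must control the logarithmic singularity of the $\mu$-independent series $\sum_\xi |\xi|^2 e^{\i\xi\cdot y}/(|\xi|^4+1)$ --- precisely the step you flag as the main obstacle and only sketch. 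The paper never needs this: since $g(x_0)=0$, the condition at $x_0$ is $\lim_{x\to x_0}\bigl[(G_\lambda-G_\i)+e^{\i\varphi}(G_\lambda-G_{-\i})\bigr](x,x_0)=0$, and each difference $G_\lambda-G_{\pm\i}$ has an absolutely convergent Fourier expansion (coefficients of size $|\xi|^{-4}$), so the limit is computed termwise and only regularized differences ever appear; the algebra with $\Re\,(|\xi|^2-\i)^{-1}=|\xi|^2/(|\xi|^4+1)$ and $\Im\,(|\xi|^2-\i)^{-1}=1/(|\xi|^4+1)$ then gives $S(\lambda)=c_0\tan(\varphi/2)$, exactly parallel to your computation of $S(\pm\i)=\pm\i c_0$. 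In fact your own constraint $(1+e^{\i\varphi})B(\lambda)=B(\i)+e^{\i\varphi}B(-\i)$ involves only the differences $B(\lambda)-B(\pm\i)$, and these equal $-\frac{1}{4\pi^2}\bigl(S(\lambda)-S(\pm\i)\bigr)$ directly from the expansion of $G_\lambda-G_{\pm\i}$, so the identification of $B(\mu)$ itself (and the constant $C$) is never required and your anticipated obstacle can simply be bypassed. One further small point: in the converse you use that vanishing of both the log coefficient and the constant term at $x_0$ implies membership in $\tilde{D_0}$; this is true (write an element of $\Dom(-\Delta_{x_0}^*)$ as $g_0+aG_\i+bG_{-\i}$: the log coefficient forces $b=-a$, and then the constant term is $2\i a\,\Im B(\i)$ with $\Im B(\i)=-c_0/(4\pi^2)\neq 0$, so $a=0$), and the paper relies on the same fact implicitly when it concludes $(1+e^{\i\varphi})G_\lambda\in D_\varphi$ from $f_\lambda(x_0)=0$.
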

\begin{proof}
Let $f\in D_\varphi$ and $\left\|f\right\|_2=1$. Then $f$ must be of
the form
\begin{equation}
f=g+c G_\i+c e^{\i\varphi}G_{-\i}, \qquad g\in D_0, c\in \C.
\end{equation}

Let us first assume that $\lambda\notin\sigma(-\Delta)$ is an
eigenvalue of $-\Delta_\varphi$. We have
\begin{equation}
0=(\Delta_\varphi+\lambda)f =(\Delta+\lambda)g+c(\lambda-\i)G_\i+c
e^{\i\varphi}(\lambda+\i)G_{-\i}.
\end{equation}
We apply the resolvent $(\Delta+\lambda)^{-1}$ to both sides to
obtain
\begin{equation}\label{eq1}
0=g+c\frac{\lambda-\i}{\Delta+\lambda}G_\i+c
e^{\i\varphi}\frac{\lambda+\i}{\Delta+\lambda}G_{-\i}.
\end{equation}
In view of the iterated resolvent identity
\begin{equation}\label{itresolvent}
(\lambda\mp\i)\frac{1}{\Delta+\lambda}\frac{1}{\Delta\pm\i}=\frac{1}{\Delta\pm\i}-\frac{1}{\Delta+\lambda}
\end{equation}
we can rewrite equation \eqref{eq1} as
\begin{equation}\label{A.15}
0=g(x)+c(G_\i-G_\lambda)(x,x_0)+c
e^{\i\varphi}(G_{-\i}-G_\lambda)(x,x_0).
\end{equation}
In particular
\begin{equation}
f=g+c(G_\i +e^{\i\phi}G_{-\i}) = c(1+e^{\i\phi}) G_\lambda
\end{equation}
and so $f$ is a multiple of $G_\lambda$.

If we now take the limit $x\to x_0$ on the r.h.s. of \eqref{A.15} we
obtain
\begin{equation}\label{eq2}
0=\lim_{x\to x_0}(G_\i-G_\lambda)(x,x_0)+e^{\i\varphi}\lim_{x\to
x_0}(G_{-\i}-G_\lambda)(x,x_0)
\end{equation}
and note that $\lambda\notin\sigma(-\Delta)$ implies $c\neq0$ so we
may drop the constant. A simple rearrangement of this equation
yields
\begin{equation}\label{eq3}
\tan(\varphi/2)\lim_{x\to x_0}\Im G_\i(x,x_0) =\lim_{x\to
x_0}(G_\lambda-\Re G_\i)(x,x_0).
\end{equation}
In order to obtain equation \eqref{speceq} we require the following
$L^2$-identity for the Green's function $G_\lambda$ on $\T^{2}$
\begin{equation}
G_{\lambda}(x,x_{0})=-\frac{1}{4\pi^2}\sum_{\xi\in\lat}\frac{\exp(\i\xi\cdot
(x-x_{0}))}{|\xi|^{2}-\lambda},\qquad x\neq x_{0}.
\end{equation}
We rewrite the r.h.s. of \eqref{eq3} as
\begin{equation}
\begin{split}
&-\frac{1}{4\pi^2}\sum_{\xi\in\lat}e(\xi\cdot (x-x_{0}))\left\{\frac{1}{|\xi|^{2}-\lambda}-\Re\frac{1}{|\xi|^{2}-\i}\right\}\\
=&-\frac{1}{4\pi^2}\sum_{\xi\in\lat}e(\xi\cdot (x-x_{0}))\left\{\frac{1}{|\xi|^{2}-\lambda}-\frac{|\xi|^2}{|\xi|^4+1}\right\}\\
&\stackrel{x\to
x_0}{\longrightarrow}-\frac{1}{4\pi^2}\sum_{\xi\in\lat}
\left\{\frac{1}{|\xi|^{2}-\lambda}-\frac{|\xi|^2}{|\xi|^4+1}\right\}
.
\end{split}
\end{equation}
Finally, note that $$\lim_{x\to x_0}\Im
G_\i(x,x_0)=-\frac{1}{4\pi^2}\sum_{\xi\in\lat}\frac{1}{|\xi|^4+1}.$$

To see the reverse implication assume that $\lambda$ solves equation
\eqref{eq2}, a rearrangement of equation \eqref{speceq}. The r.h.s.
of equation \eqref{eq2} has singularities at points which are in
$\sigma(-\Delta)$, hence $\lambda\notin\sigma(-\Delta)$. We define
$$f_\lambda(x)=(G_\lambda-G_\i)(x,x_0)+e^{\i\varphi}(G_{\lambda}-G_{-\i})(x,x_0)$$
and observe that
\begin{equation}
(1+e^{\i\varphi})G_{\lambda}=f_\lambda+G_{\i}+e^{\i\varphi}G_{-\i}\in
D_{\varphi}
\end{equation}
because equation \eqref{eq2} implies $f_\lambda(x_0)=0$. The
iterated resolvent identity \eqref{itresolvent} implies
\begin{equation}
(\Delta_\varphi+\lambda)f_\lambda=(\Delta+\lambda)f_\lambda=-(\lambda-\i)G_\i-(\lambda+\i)G_\i
\end{equation}
and by the definition of the operator $\Delta_\varphi$ we have
\begin{equation}
(1+e^{\i\varphi})(\Delta_\varphi+\lambda)G_\lambda=(\Delta+\lambda)f_\lambda+(\lambda-\i)G_\i+(\lambda+\i)G_{-\i}=0.
\end{equation}
This concludes the proof.
\end{proof}


\begin{thebibliography}{99}


\bibitem{BC}
R.~P.~Bambah and S.~Chowla. {\em On numbers which can be expressed
as a sum of two squares}. Proc. Nat. Inst. Sci. India, 13, 101--103
(1947).

\bibitem{BKW}
G.~Berkolaiko, J.~P.~Keating and B.~Winn. {\em Intermediate wave
functions statistics}, Phys. Rev. Lett. 91, 134103 (2003).

\bibitem{BGeS}
E.~Bogomolny, U.~Gerland and C.~Schmit. {\em Singular statistics}.
Phys. Rev. E (3) 63 (2001), no. 3, part 2, 036206.

\bibitem{BGiS}
E.~Bogomolny, O.~Giraud  and C.~Schmit. {\em Nearest-neighbor
distribution for singular billiards}. Phys. Rev. E (3) 65 (2002),
no. 5, 056214.



\bibitem{CS}
 T.~Shigehara and Taksu Cheon, {\em  Wave chaos in quantum billiards with a small but finite-size
 scatterer}. Phys. Rev. E 54, 1321--1331 (1996).


\bibitem{CdV}
Y.~Colin de Verdi\`ere. {\em Pseudo-laplaciens} I. Annales de
l'Institut Fourier, tome 32, no. 3, 275-286, 1982.

\bibitem{CdV2}
 Y.~Colin de Verdi\`ere {\em Ergodicit\'e et fonctions propres du laplacien}.
Comm. Math. Phys.  102  (1985),  no. 3, 497--502.






\bibitem{Hejhal}
D.~A.~Hejhal. {\em Some observations concerning eigenvalues of the
Laplacian and Dirichlet L-series}. Recent progress in analytic
number theory, Symp. Durham 1979, Vol. 2, 95-110 (1981).


\bibitem{Huxley2003}
M.~N~ Huxley, {\em Exponential sums and lattice points}. III.
 Proc. London Math. Soc. (3) 87 (2003), no. 3, 591--609.



\bibitem{Jakobson}
D.~Jakobson. {\em Quantum limits on flat tori}. Ann. of Math. (2)
145 (1997),  235--266.


\bibitem{KMW}
 J.~P.~Keating, J.~Marklof and B.~Winn. {\em Localized eigenfunctions in \u{S}eba billiards}. J. Math. Phys. 51 (2010), no. 6,
 062101.

\bibitem{KMS}
S.~Kerckhoff, H.~Masur and J.~Smillie, {\em Ergodicity of billiard
flows and quadratic differentials}. Ann. of Math. (2) 124 (1986),
no. 2, 293--311.


\bibitem{Kronig-Penney}
R.~de L.~ Kronig and W.~G.~Penney, {\em Quantum Mechanics of
Electrons in Crystal Lattices}. Proceedings of the Royal Society of
London. Series A,
 Vol. 130, No. 814 (Feb. 3, 1931), pp. 499--513.


\bibitem{Landau}
E.~Landau, {\em \"Uber die Einteilung der positiven ganzen Zahlen in
vier Klassen nach der Mindeszahl der zu ihrer additiven
Zusammensetzung erforderlichen Quadrate.} Arch. Math. Phys. 13,
305--312, 1908.

\bibitem{MR}
J.~Marklof and Z.~Rudnick, {\em Almost all eigenfunctions of a
rational polygon are uniformly distributed}. J. of Spectral Theory.
2 (2012), 107--113.


\bibitem{ORW}
F.~Oravecz, Z.~Rudnick and I.~Wigman,  {\em The Leray measure of
nodal
  sets for random eigenfunctions on the torus}, Annales de l'Institut
  Fourier  58  (1), (2008), 299--335.

%


\bibitem{Rahav-Fishman}
 S.~Rahav and S.~Fishman. {\em Spectral statistics of rectangular billiards with localized perturbations}. Nonlinearity 15 (2002), no. 5,
 1541--1594.

\bibitem{RS}
M.~Reed and B.~Simon. {\em Methods of Modern Mathematical Physics,
Vol. 2: Fourier Analysis and Self-Adjointness}. Academic Press,
London, 1975.

\bibitem{Seba}
P.~\u{S}eba. {\em Wave Chaos in Singular Quantum Billiard}. Phys.
Rev. Letters, Vol. 64, No. 16, 1855-58, 1990.


\bibitem{Shigehara}
T.~Shigehara, {\em Conditions for the appearance of wave chaos in
quantum singular systems with a pointlike scatterer}. Phys. Rev. E
50, 4357--4370 (1994).


\bibitem{Schnirelman}
A.~Snirel'man {\em Ergodic properties of eigenfunctions.} Uspekhi
Mat. Nauk 29, no. 6 (180), 181--182, 1974



\bibitem{van der Corput}
J.~G~.  van der Corput, {\em Neue zahlentheoretische
Absch\"atzungen}. Math. Ann. 89 (1923), no. 3--4, 215--254.

\bibitem{Zelditch}
S.~Zelditch. {\em Uniform distribution of eigenfunctions on compact
hyperbolic surfaces.} Duke Math. J. 55, no. 4, 919-941, 1987.

\bibitem{ZZ}
S.~Zelditch and M.~Zworski, {\em Ergodicity of eigenfunctions for
ergodic billiards}. Comm. Math. Phys. 175, 673--682, (1996).





\end{thebibliography}
\end{document}